\newcommand{\eps}{\varepsilon}
\renewcommand{\gcd}{\operatorname{gcd}}
\newcommand{\e}{\mathrm{e}}
\newcommand{\dx}{\,\mathrm{d}x}
\newcommand{\dy}{\,\mathrm{d}y}
\newcommand{\ua}{\boldsymbol{\alpha}}
\newcommand{\ulam}{\boldsymbol{\lambda}}
\newcommand{\Aa}{\mathcal{A}}
\newcommand{\Hh}{\mathcal{H}}
\newcommand{\Ss}{\mathcal{S}}
\newcommand{\Rr}{\mathcal{R}}
\newcommand{\Ll}{\mathcal{L}}
\newcommand{\Nn}{\mathcal{N}}
\newcommand{\T}{\mathcal{T}}
\newcommand{\Ma}{\mathfrak{M}}
\newcommand{\ma}{\mathfrak{m}}
\newcommand{\N}{\mathbb{N}}
\newcommand{\Z}{\mathbb{Z}}
\newcommand{\R}{\mathbb{R}}
\newcommand{\C}{\mathbb{C}}
\newcommand{\uu}{\mathbf{u}}
\newcommand{\uv}{\mathbf{v}}
\newcommand{\um}{\mathbf{m}}
\newcommand{\un}{\mathbf{n}}
\newtheorem{theorem}{Theorem}
\newtheorem{lemma}[theorem]{Lemma}
\newtheorem{conjecture}[theorem]{Conjecture}
\theoremstyle{definition}
\newtheorem{definition}[theorem]{Definition}
\begin{document}

\title{Bounds for discrete moments of Weyl sums 
and applications}

\author[K.~Halupczok]{Karin Halupczok}

\address{Karin Halupczok,  Mathematisches Institut der
HHU D\"{u}sseldorf,
Universit\"{a}tsstra\ss{}e 1, D-40225 D\"{u}sseldorf, Germany}

\email{karin.halupczok@uni-duesseldorf.de}
\date{}

\begin{abstract}
We prove two bounds for discrete moments of Weyl sums.
The first one can be obtained using a standard approach.
The second one involves an observation how this method
can be improved, which leads to a sharper bound in 
certain ranges.
The proofs both build on the recently proved 
main conjecture for Vinogradov's mean value theorem.

We present two selected applications:
First, we prove a new $k$-th derivative test
for the number of integer points close to a curve
by an exponential sum approach. 
This yields a stronger bound than existing results 
obtained via geometric methods, but it is only applicable 
for specific functions. 
As second application we prove a new improvement
of the polynomial large sieve inequality for one-variable
polynomials of degree $k\geq 4$. 
\end{abstract}

\makeatletter
\@namedef{subjclassname@2010}{%
  \textup{2010} Mathematics Subject Classification}
\makeatother

\subjclass{Primary 11L15, Secondary 11J54}

\keywords{Weyl sums, discrete moments, Vinogradov's Mean Value
Theorem, integer points close to a curve, polynomial large sieve inequality}

\maketitle

\markboth{\textsc{Karin Halupczok}}{\textsc{Bounds for discrete moments of Weyl sums}}

\section{Introduction}
\label{sec:intro}

The recent breakthroughs of Bourgain, Demeter and Guth in \cite{BDG}
and Wooley in \cite{Woo,WooNest}
has led to a full proof of the main conjecture in Vinogradov's Mean
Value Theorem (VMVT for short).
As one consequence among many, new estimates for Weyl sums are available.
With a standard approach, in this article, we show
that these already lead to strong estimates for moments of Weyl sums
(see Theorem~\ref{Satz2}).

In this context we record an observation that for moments of Weyl
sums, a small extra-improvement can be made using Montgomery's
so-called alternative derivation from \cite[\S 4]{Mont} incorporating
VMVT (see Theorem~\ref{Satz1}).
This additional gain can be exploited in a certain range
for the approximating denominator (see \eqref{eq:impr_range})
assuming the length of summation is large enough.
We formulate a conjecture stating where this gain might lead to, if 
further refinements were available (see Conjecture~\ref{conj:c}).

Then, in Section~\ref{sec:mvexposums}, the Weyl sum moment estimates
are used to prove $k$-th derivative tests
for discrete moments of exponential sums with smooth functions
(Theorems~\ref{th:Thb} and \ref{th:Thb2}, the mentioned
extra-improvement is incorporated in Theorem~\ref{th:Thb}).

The achieved bounds for moments of Weyl sums and exponential sums with
smooth functions lead to improvements in some number-theoretic
applications, and we present two such applications. 

The first application, discussed in Section~\ref{sec:appl1},
is the problem of counting integer points close to smooth curves. 
For this, we use a new approach involving exponential sums such 
that strong bounds for the counting quantity $\Rr(f,N,\delta)$, see
Definition~\ref{defR}, can be obtained.
The novelty is to perform an efficient Weyl shift step over a 
set of indices known to be $H'$-spaced so that
the cluster structure of the indices is respected. This allows a
saving of an extra factor $H^{-1}$ in the proof of
Theorem~\ref{th:thR}, see the arguments before \eqref{eq:eqsuma}.

Compared to some existing bounds the resulting
bound in Theorem~\ref{th:thR}
is stronger, but is valid only for certain appropriate
functions. This is discussed at the end of the section.

The second application, discussed in Section~\ref{sec:polyLSI}, concerns
the polynomial large sieve inequality from \cite{KH,KH2}.
In the one-dimensional case we obtain a new improvement 
of the bound. That new bound comes from the extra-improvement in
Theorem~\ref{Satz1}. 

\subsection{Notations and conventions}
\label{ssec:notations} 
Let $k$ denote a fixed positive integer and
let $\eps$ be an arbitrary small positive real number that may change
its value during calculations. By $s,s_{0},s_{1}\geq 1$ 
we denote integers that depend on $k$.
In this article,
we suppress the dependence of the implicit constants
on $k$, $s$ or $\eps$ in our notation, simply writing
$\ll$ for $\ll_{k,s,\eps}$.
Moreover, we write $f\lll g$ if $f(x)=o(g(x))$, that is if
$f(x)/g(x)\to 0$ for $x\to \infty$.

For $\alpha\in\R$ we write
$\e(\alpha):=\exp(2\pi i \alpha)$ for
the complex exponential function and $\|\alpha\|$ denotes the distance
from $\alpha$ to the nearest integer.

For integers $k\geq 1$, $s\geq 1$ and a real number $x>0$ we use the
notation $J_{k}(x,s)$ for Vinogradov's integral, that is
the number of solutions to Vinogradov's system
\begin{align*}
  m_{1}+\dots+m_{s}&= n_{1}+\dots+n_{s} \\
m_{1}^{2}+\dots+m_{s}^{2}&= n_{1}^{2}+\dots+n_{s}^{2} \\
&\cdots\\
m_{1}^{k}+\dots+m_{s}^{k}&= n_{1}^{k}+\dots+n_{s}^{k}  
\end{align*}
with $1\leq m_{1},\dots,m_{s},n_{1},\dots,n_{s}\leq x$.
In this work, although no use of the integral representation of
$J_{k}(x,s)$ is made, the $\ell_{2}$-norm of the counting
function $r_{s}(\ulam)$ is used in the proof of 
Theorem \ref{Satz1}.

Given a positive integer $n$ we write $\tau(n)$ for the number of 
divisors of $n$, and $\tau_{3}(n)$ denotes the number of ways one can
write $n$ as a product of $3$ factors. We will use the well-known
estimates $\tau(n)\ll n^{\eps}$ and $\tau_{3}(n)\ll n^{\eps}$.

The set of real functions with continuous derivatives of
order up to $k$ on an interval $I$ is denoted by $C^{k}(I)$.


\subsection{Auxiliaries}
\label{ssec:auxlemmas}

We collect some auxiliary results needed as tools in 
this article.

The following is the well-known sum lemma, 
see e.g.~\cite[Lemma 4C]{Schm} for a proof.

\begin{lemma}[Sum lemma]
\label{lem:sumlemma}
  For $\alpha\in\R$ let $u,q$ be integers with $(u,q)=1$, $0\leq u\leq
  q-1$ and $|\alpha-u/q|<q^{-2}$. Let $\beta\in\R$. Then
\[
   \sum_{Z\leq h\leq Y} \min(X,\|\alpha h +\beta\|^{-1}) \ll
      (X+q\log(q))((Y-Z)q^{-1}+1).
\]
\end{lemma}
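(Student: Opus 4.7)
The plan is to partition the summation range $\{h : Z \leq h \leq Y\}$ into $O((Y-Z)q^{-1} + 1)$ consecutive blocks of length $q$ and to reduce to showing the per-block bound
\[
\sum_{h \in I} \min(X, \|\alpha h + \beta\|^{-1}) \ll X + q \log q
\]
on each full block $I = \{h_{0}+1, \ldots, h_{0}+q\}$. Multiplying by the number of blocks will then yield the claim.

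To establish the per-block bound I exploit the Diophantine approximation, writing $\alpha = u/q + \theta$ with $|\theta| < q^{-2}$ and parametrizing $h = h_{0} + k$ for $1 \leq k \leq q$. This gives
\[
\alpha h + \beta \equiv \frac{j_{h}}{q} + \gamma + \theta k \pmod{1},
\]
where $j_{h} \in \{0, 1, \ldots, q-1\}$ is the reduction of $uh$ modulo $q$ and $\gamma$ absorbs the $h$-independent terms $\theta h_{0} + \beta$. Because $(u,q) = 1$, the map $h \mapsto j_{h}$ is a bijection from $I$ onto $\{0, 1, \ldots, q-1\}$, while the residual perturbation satisfies $|\theta k| \leq |\theta| q < 1/q$. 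Hence the $q$ values $\{\alpha h + \beta\}$ for $h \in I$ lie within $1/q$ of the equally spaced ideal grid $\{\gamma + j/q \bmod 1 : 0 \leq j \leq q-1\}$.

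From this I deduce the pigeonhole estimate
\[
\#\{h \in I : \|\alpha h + \beta\| \leq t\} \ll 1 + qt \qquad (0 \leq t \leq 1/2),
\]
which is the crux of the proof. Splitting the sum over $I$ according to the bands $\|\alpha h + \beta\| \in [m/q, (m+1)/q)$ for $0 \leq m \leq q/2$, and bounding $\min(X, \|\alpha h + \beta\|^{-1})$ by $X$ on the band $m=0$ and by $q/m$ otherwise, produces
\[
\sum_{h \in I} \min(X, \|\alpha h + \beta\|^{-1}) \ll X + \sum_{1 \leq m \leq q/2} \frac{q}{m} \ll X + q \log q,
\]
as required. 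The main obstacle lies in the clean verification of the counting inequality for $\{h \in I : \|\alpha h + \beta\| \leq t\}$; this is a short combinatorial argument exploiting the bijectivity of $h \mapsto j_{h} \bmod q$ and controlling how perturbations of size $<1/q$ can redistribute grid points into intervals of length $t$.
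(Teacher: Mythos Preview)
The paper does not actually prove this lemma; it merely cites Schmidt's monograph (Lemma~4C there) for a proof. Your argument is the standard one---block decomposition of length $q$, bijectivity of $h\mapsto uh\bmod q$, and the perturbation bound $|\theta k|<1/q$---and is essentially what one finds in Schmidt or in Vaughan's book.

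One small point worth tightening: the cumulative estimate $\#\{h\in I:\|\alpha h+\beta\|\le t\}\ll 1+qt$ that you state gives only $\#\{\text{band }m\}\ll m+2$, not $O(1)$ per band, so the passage to $\sum_{1\le m\le q/2} q/m$ is not literally justified as written. You need either the slightly stronger interval count $\#\{h\in I:\|\alpha h+\beta\|\in[s,s+1/q)\}=O(1)$ (which follows immediately from the same pigeonhole, since each perturbed point lies within $1/q$ of a \emph{unique} grid point $j_h/q+\gamma$), or an Abel summation using your cumulative bound. Either route fills the gap in one line, so the proof is correct in substance.
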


For further improvements, we will also make use of the following
result from \cite[Lemma 9C]{Schm}.
\begin{lemma}[Variant of sum lemma]
\label{lem:varsumlemma}
  For $\alpha\in\R$ let $u,q$ be integers with $(u,q)=1$, 
$0\leq u\leq q-1$ and $|\alpha-u/q|<q^{-1}X^{-1}$. Let $\beta\in\R$. Then
\[
   \sum_{1\leq j\leq q} \min(X,\|\alpha j +\beta\|^{-1}) \ll
      \min(X,q\|\beta q\|^{-1}) + q\log q.
\]
\end{lemma}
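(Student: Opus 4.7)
The plan is to reduce the sum over $1 \le j \le q$ to a sum over fractions $i/q$ ($i = 0, \dots, q-1$) by exploiting the hypothesis $(u,q) = 1$, and then to analyse a single exceptional term separately from a residual tail that yields the $q\log q$ contribution.

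First, I would write $\alpha = u/q + \theta$ with $|\theta| < 1/(qX)$, so that for $1\le j\le q$ the perturbation $|\theta j|$ is strictly less than $1/X$. Since $(u,q)=1$, the map $j \mapsto uj \bmod q$ is a bijection between $\{1,\dots,q\}$ and $\{0,1,\dots,q-1\}$, and consequently
\begin{equation*}
  \sum_{1\le j\le q} \min\bigl(X, \|\alpha j+\beta\|^{-1}\bigr)
  = \sum_{i=0}^{q-1} \min\Bigl(X, \bigl\|\tfrac{i}{q} + \beta + \theta j(i)\bigr\|^{-1}\Bigr),
\end{equation*}
where $j(i)$ is the unique preimage. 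Next I write $\beta q = m + \gamma$ with $m\in\Z$ and $|\gamma|=\|\beta q\|\le 1/2$, so $\beta = m/q + \gamma/q$, and I shift the summation index to $i' \equiv i+m \pmod q$, $i'\in\{0,1,\dots,q-1\}$.

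The decisive observation is then that for $i'\ne 0$ one has $\|i'/q + \gamma/q\| \ge 1/(2q)$ by elementary spacing, and the values $\|i'/q+\gamma/q\|$ for $i'=1,\dots,q-1$ range through (at most two copies of) quantities comparable to $k/q$ for $k=1,\dots,\lfloor q/2\rfloor$. Since the perturbation $|\theta j(i)| < 1/X$ either is swallowed by the term $\|i'/q+\gamma/q\|$ (if the latter exceeds $2/X$, halving it only changes constants), or else the bound $\min(X,\cdot)\le X \le 2/\|i'/q+\gamma/q\|$ applies directly, the net contribution from $i'\ne 0$ is at most
\begin{equation*}
  \sum_{i'=1}^{q-1} \frac{2}{\|i'/q + \gamma/q\|}
  \ll \sum_{k=1}^{\lfloor q/2\rfloor} \frac{q}{k}
  \ll q\log q.
\end{equation*}

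The only remaining term is $i' = 0$, which corresponds to a single $j$ for which the contribution equals $\min(X, \|\theta j + \gamma/q\|^{-1})$. This is trivially $\le X$, and if in addition $\|\beta q\|/q \ge 2/X$, then $\|\theta j + \gamma/q\| \ge \|\beta q\|/(2q)$ since $|\theta j| < 1/X$, giving the alternative bound $\ll q/\|\beta q\|$; combining yields $\ll \min(X, q\|\beta q\|^{-1})$. Adding the two contributions completes the proof. The main subtlety I expect is the careful handling of the perturbation $\theta j$ near the exceptional index, making sure that the reduction $\|\theta j + \gamma/q\| \gtrsim \|\beta q\|/q$ is valid precisely in the regime where $\min(X, q/\|\beta q\|)$ equals the smaller of the two; this is the only place where the hypothesis $|\alpha - u/q| < 1/(qX)$ (rather than the weaker Dirichlet-type $1/q^2$) is essential.
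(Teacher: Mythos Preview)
The paper does not give its own proof of this lemma: it is quoted without proof from \cite[Lemma~9C]{Schm}. Your argument is correct and is essentially the standard one found there --- write $\alpha=u/q+\theta$ with $|\theta|<1/(qX)$, use the bijection $j\mapsto uj\bmod q$ to replace $\alpha j$ by $i/q$ up to an error $<1/X$, shift by the integer part of $\beta q$, and split off the single residue class $i'\equiv 0$ that produces the term $\min(X,q\|\beta q\|^{-1})$, while the remaining $q-1$ well-spaced classes contribute $O(q\log q)$.

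One minor point worth tightening in your write-up: when you bound the exceptional term $i'=0$ in the regime $\|\beta q\|/q\ge 2/X$, you should note explicitly that $|\theta j+\gamma/q|\le 1/X+1/(2q)$ stays below $1$ (indeed below $3/(4q)+1/(2q)$ in that regime for $q\ge 2$), so that $\|\theta j+\gamma/q\|=|\theta j+\gamma/q|$ and the lower bound $\ge |\gamma|/(2q)$ genuinely controls the nearest-integer distance. The case $q=1$ is trivial. Otherwise the argument is complete.
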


Next, we need the following simple bound
for the number of curve points close to integer points.
This is Lemma 2 in \cite{HB-V}, see also \cite[Thm. 5.6]{Bor}, where
a proof is provided.
\begin{lemma}[Curve points close to integer points]
\label{lem:smf}
Let $N$ be a positive integer, and suppose that $g(x) : [0, N] \to \R$ has
a continuous derivative on $(0, N )$. Suppose further that
$0 < \lambda \leq g'(x) \leq A \lambda$ for all $x \in (0, N)$.
Then
\[
   \#\{n \leq N : \|g(n)\| \leq \delta\} 
  \ll (1 + A \lambda N )(1 + \delta/\lambda).
\]
\end{lemma}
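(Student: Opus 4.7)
The plan is to partition the count according to the nearest integer to $g(n)$. Since the hypothesis $g'(x) \geq \lambda > 0$ on $(0,N)$ forces $g$ to be strictly increasing and continuous on $[0,N]$, I would write
\[
\#\{n \leq N : \|g(n)\| \leq \delta\} \leq \sum_{m \in M} \#\{n \leq N : g(n) \in [m-\delta, m+\delta]\},
\]
where $M$ denotes the set of integers $m$ whose $\delta$-neighbourhood meets the image $g([0,N])$.

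For the number of contributing levels $m$: by the mean value theorem and the upper bound $g' \leq A\lambda$, the image $g([0,N])$ is contained in an interval of length at most $A\lambda N$, so $|M| \leq A\lambda N + 2\delta + 1 \ll 1 + A\lambda N$ provided $\delta < 1/2$. For each individual summand: if $n_1 < n_2$ both lie in the preimage of $[m-\delta, m+\delta]$, then the lower bound $g'\geq\lambda$ together with the mean value theorem yields $\lambda(n_2 - n_1) \leq g(n_2) - g(n_1) \leq 2\delta$, so $n_2 - n_1 \leq 2\delta/\lambda$. Each summand is therefore bounded by $1 + 2\delta/\lambda \ll 1 + \delta/\lambda$, and multiplying the two estimates yields the claimed inequality.

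The remaining case $\delta \geq 1/2$ is trivial: every integer $n \leq N$ then satisfies $\|g(n)\| \leq \delta$ automatically, and the count $N+1$ is absorbed by the right-hand side, which is $\gg A N$ (noting $A\geq 1$, since the hypothesis forces $\lambda\leq A\lambda$). There is no substantive obstacle here; the only care needed is the bookkeeping to absorb lower-order cross terms into $(1+A\lambda N)(1+\delta/\lambda)$, which is straightforward once the two cases are separated.
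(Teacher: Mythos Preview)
Your argument is correct and is precisely the standard proof of this lemma. Note, however, that the paper does not supply its own proof here: it simply cites Lemma~2 of \cite{HB-V} and \cite[Thm.~5.6]{Bor} for the argument. The proof you have written is essentially the one found in those references, so there is nothing further to compare.
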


We also need the following simple assertion.
\begin{lemma}
  \label{lem:sri}
  Consider positive real functions $S,f$ and $A$.
  Assume that $S(x)\ll f(x)S(x) + A(x)$. If $f(x)$ tends to zero for
  $x\to \infty$, then $S(x)\ll A(x)$.
\end{lemma}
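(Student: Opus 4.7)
The statement is a standard absorption / bootstrapping lemma, so the proof plan is almost mechanical rather than involving any serious obstacle.

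My plan is to unpack the $\ll$-notation in the hypothesis and then absorb the $f(x)S(x)$-term into the left hand side once $f$ is small enough. Concretely, the assumption $S(x)\ll f(x)S(x)+A(x)$ supplies a constant $C>0$ (depending on the suppressed parameters $k,s,\eps$) such that
\[
   S(x)\leq C f(x)S(x)+C A(x)
\]
for all $x$ in the relevant range. Since $f(x)\to 0$ as $x\to\infty$, I would choose $x_{0}$ large enough that $Cf(x)\leq \tfrac12$ whenever $x\geq x_{0}$.

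Next, for such $x$ I would rearrange to get $(1-Cf(x))S(x)\leq C A(x)$, hence $S(x)\leq 2CA(x)$. Because we are working in $\ll$-notation and $A$ is positive, the finitely many (or bounded) values $x<x_{0}$ can be absorbed into the implicit constant in the usual way, yielding $S(x)\ll A(x)$ as required.

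The only point requiring a moment's care is the positivity of $S$ and $A$: positivity of $S$ is what makes the absorption step legitimate (one is not dividing by something that could be zero or negative in the limiting argument), and positivity of $A$ ensures the bound for small $x$ can be incorporated into the implicit constant. Both are included in the hypotheses, so there is no genuine obstacle; the lemma is essentially a formalisation of the standard trick of moving a small multiple of $S(x)$ from the right hand side to the left.
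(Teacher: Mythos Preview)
Your argument is correct and is essentially identical to the paper's: unpack the implicit constant $C$, use $f(x)\to 0$ to ensure $Cf(x)\leq \tfrac12$ for large $x$, rearrange to $S(x)\leq 2CA(x)$. The only addition you make is the explicit remark about absorbing the contribution from small $x$ into the implicit constant, which the paper leaves tacit.
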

\begin{proof}
  If $C$ denotes the implicit constant, then $S(x)(1-Cf(x))\leq C
  A(x)$. With $f(x)\leq C/2$ for all large $x$ we deduce $S(x)\leq 2C A(x)$.
\end{proof}

Another important ingredient is 
Vinogradov's Mean Value Theorem. The theorem is elementary for $k=1$
and $k=2$. For the highly nontrivial cases $k\geq 3$ it has been
proved in \cite{BDG} by Bourgain, Demeter and Guth for $k\geq 4$ 
and in \cite{Woo} by Wooley for $k=3$, and again in \cite{WooNest} by Wooley 
for $k\geq 3$. In our analysis, we will make use of this deep estimate.
\begin{theorem}[VMVT] 
\label{th:VMVT}
Let $s\geq 1$, $k\geq 1$ be integers and
  $\eps>0$. Then
  $J_{k}(x,s)\ll  (x^{s} + x^{2s-k(k+1)/2})x^{\eps}$.
\end{theorem}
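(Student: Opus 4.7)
The plan is to prove the upper bound $J_k(x,s)\ll (x^s + x^{2s-k(k+1)/2})x^\eps$ by combining Fourier analysis with $p$-adic arithmetic. For $k=1$ and $k=2$ the estimate reduces to elementary counting (the first case is a linear equation, the second follows from divisor bounds applied to the $s=2$ identity and interpolation). The substantial cases are $k\geq 3$, and I would follow Wooley's \emph{nested efficient congruencing} approach from \cite{Woo,WooNest}, with the $\ell^2$-decoupling theorem of \cite{BDG} as an alternative conceptual route.

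The starting point is the orthogonality identity
\[
  J_k(x,s) \;=\; \int_{[0,1]^k} |f(\ua)|^{2s}\,d\ua,
  \qquad f(\ua) \;=\; \sum_{1\leq n\leq x} \e(\alpha_1 n + \alpha_2 n^2 + \cdots + \alpha_k n^k),
\]
which reduces the task to bounding a $2s$-th moment of a Weyl sum on the moment curve. The arithmetic input comes from partitioning the variables into residue classes modulo a prime $p\asymp x^{1/k}$. If the $m$- and $n$-variables of a solution lie in specified classes modulo $p^j$, Taylor-expanding the polynomials $1,x,\ldots,x^k$ about a point in that class yields additional congruences modulo higher powers of $p$. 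This effectively reduces the Vinogradov system inside a congruenced block to one of smaller size; formalising this yields the \emph{congruencing lemma}.

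The next step is to iterate, tracking a nested family of auxiliary quantities parametrised by two congruence levels $(a,b)$, so that the losses at successive steps can be balanced against each other. The iteration produces a functional inequality bounding $J_k(x,s)$ in terms of the same type of quantity at a smaller scale. Introducing the critical exponent $\lambda_s=\inf\{\lambda:\; J_k(x,s)\ll_\eps x^{\lambda+\eps}\}$, the recursion forces $\lambda_s \leq \max(s,\,2s-k(k+1)/2)$, which is precisely the desired bound up to the $x^\eps$ factor. The use of Lemma~\ref{lem:sri} together with a standard bootstrap then removes the small excess.

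The principal obstacle is arranging the congruencing step to be genuinely \emph{efficient}: a naive implementation loses a factor of $p$ at each iteration, and these losses accumulate until the bound becomes trivial. Wooley's innovation is a delicate interpolation and rescaling between the nested quantities, designed so that the cumulative losses telescope to a harmless total. The parallel obstacle in the decoupling route of \cite{BDG} is the multilinear-to-linear reduction via a Bourgain--Guth broad/narrow decomposition, which requires multilinear Kakeya as a deep ingredient. Either way, the substantial harmonic-analytic or $p$-adic machinery involved is why the theorem is cited here as a black box rather than re-proved.
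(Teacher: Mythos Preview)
The paper does not prove this statement at all: Theorem~\ref{th:VMVT} is recorded in the Auxiliaries subsection as a deep input, with the cases $k\geq 3$ attributed to \cite{BDG} and \cite{Woo,WooNest}, and is then used as a black box throughout. Your final paragraph already acknowledges this, so in that sense your ``proposal'' matches the paper's treatment exactly --- namely, cite rather than prove.

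The sketch you give of the efficient-congruencing and decoupling routes is a fair high-level summary of the literature proofs, but it is not a proof in any self-contained sense, and one detail is out of place: invoking Lemma~\ref{lem:sri} to ``remove the small excess'' is not right. That lemma is the trivial observation that $S(x)\ll f(x)S(x)+A(x)$ with $f(x)\to 0$ implies $S(x)\ll A(x)$; it plays no role in the VMVT arguments of \cite{BDG,Woo,WooNest}. The genuine bootstrap in efficient congruencing is an inequality for the critical exponent $\lambda_s$ itself (or for auxiliary mean values at rescaled ranges), not an asymptotic relation of the shape handled by Lemma~\ref{lem:sri}. Apart from that slip, your outline is a reasonable pointer to the sources the paper cites.
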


\section{Discrete moments of Weyl sums}
\label{sec:WeylVMVT}

It is known that VMVT has the following impact on
Weyl sum estimates:
\begin{theorem}[Weyl sum estimate]
\label{th:weyloriginal}
  Let $P\in\R[X]$ be a polynomial of degree $k\geq 2$, and for
the leading coefficient $\alpha_{k}$ of $P$ let $u,q$ be integers with
 $(u,q)=1$, $q\geq 1$ and $|\alpha_{k}-u/q|<q^{-2}$.
Then we have
  \[
    \sum_{n\leq x} \e(P(n)) \ll  x \Big(\frac{1}{q}+\frac{1}{x}
    +\frac{q}{x^{k}}\Big)^{1/k(k-1)} x^{\eps}.
\]
\end{theorem}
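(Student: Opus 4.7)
The plan is to derive this estimate from VMVT via Vinogradov's shift-and-Hölder argument (in the spirit of the derivation of Theorem~5.2 in Vaughan's \emph{Hardy--Littlewood Method}). The exponent $1/k(k-1)$ will be produced by VMVT (Theorem~\ref{th:VMVT}) applied at $s = k(k+1)/2$, where it asserts $J_k(H,s) \ll H^{2s-k(k+1)/2+\eps}$.

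Writing $S := \sum_{n \leq x} \e(P(n))$, I would first introduce an auxiliary integer $H$ with $1 \leq H \leq x$ (to be optimized later) and use that, since the shift $n \mapsto n+h$ alters $S$ by at most $O(H)$ for $0 \leq h \leq H$, averaging over $h$ combined with Hölder's inequality at the even exponent $2s$ gives
\[
  |S|^{2s} \ll H^{-2s}\,x^{2s-1} \sum_{n \leq x} \Big|\sum_{h=1}^{H} \e(P(n+h))\Big|^{2s} + H^{2s}.
\]
Taylor expansion $P(n+h) = \sum_{j=0}^{k} \gamma_j(n)h^{j}$, with $\gamma_k(n) = \alpha_k$ and $\gamma_j(n)$ a polynomial in $n$ of degree $k-j$, turns the inner sum into a polynomial exponential sum in $h$ whose $h^{k}$-coefficient is the $\alpha_k$ carrying the Diophantine hypothesis.

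Next I would expand the $2s$-th power, reparametrize via the Vinogradov moments $\lambda_j := \sum_i(h_i^j - h_i'^{\,j})$, and swap the orders of summation. Isolating the pure phase $\e(\alpha_k\lambda_k)$ produces a weighted count over $\ulam$ whose weights are controlled by the counting function $r_s$; this count is estimated via Cauchy--Schwarz combined with VMVT in the form $J_k(H,s) \ll H^{2s-k(k+1)/2+\eps}$. The remaining inner sum over $n$ is a polynomial exponential sum of degree at most $k-1$ whose leading coefficient $k\alpha_k\lambda_1$ inherits the Diophantine information on $\alpha_k$; a short round of Weyl differencing, together with the sum lemma (Lemma~\ref{lem:sumlemma}) to handle the linearized sum, then bounds it by $x^{1+\eps}(q^{-1}+x^{-1}+qx^{-k})^{c}$ for an appropriate small exponent~$c$.

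The main obstacle is the bookkeeping: the Hölder step loses a factor $x^{2s-1}H^{-2s}$, VMVT gains $H^{-k(k+1)/2+\eps}$ inside the $\ulam$-count, and the $n$-sum contributes the $(q^{-1}+x^{-1}+qx^{-k})$ factor. Choosing $H$ of size roughly $x^{1-1/k}$ and equating these three contributions, then taking $2s$-th roots, produces the final exponent $1/k(k-1)$. No individual step is conceptually deep beyond VMVT itself; the delicate part is aligning all the exponents so that the advertised exponent $1/k(k-1)$ emerges exactly, independently of the shift parameter and of the choice of $s$.
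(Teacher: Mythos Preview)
Your outline conflates two different mechanisms and, as written, does not produce the exponent $1/k(k-1)$.

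The paper does not prove Theorem~\ref{th:weyloriginal} directly; it cites Montgomery and later recovers it as the case $T=z=1$ of Theorem~\ref{Satz2}. In that argument there is \emph{no} auxiliary shift length $H$. One works with $|S|^{2s}$ itself, expands, and uses the translation invariance of the Vinogradov system (writing $m_i=m+u_i$, $n_i=m+v_i$). This drops the effective degree by one: the relevant mean value is $J_{k-1}(x,s)$, taken at $s_1=k(k-1)/2$, and the free variable $m$ survives as a geometric sum of length $\ll x$, producing the weight $\min(x,\|kh\alpha_k\|^{-1})$ with $h=d_{k-1}$ ranging up to $O(x^{k-1})$. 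The sum lemma handles that $h$-sum, VMVT bounds $J_{k-1}$, and the exponent $1/k(k-1)=1/(2s_1)$ falls out immediately upon taking $2s_1$-th roots. No further Weyl differencing is involved.

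Your plan instead sorts the $h$-variables by all $k$ moments and invokes $J_k(H,s)$ at $s=k(k+1)/2$. Two things break. First, once $\lambda_1,\dots,\lambda_k$ are fixed, the phase $\sum_j\gamma_j(n)\lambda_j$ is a fixed polynomial in $n$; the $n$-sum is there, but its estimation is now an entirely separate degree-$(k-1)$ Weyl problem that you propose to treat by ``a short round of Weyl differencing''---this would contribute an exponent like $1/2^{k-2}$ (or, if you mean to iterate the whole method, $1/(k-1)(k-2)$), neither of which combines with the Cauchy--Schwarz/$J_k$ losses and the $H$-optimisation to give $1/k(k-1)$. Second, with $2s=k(k+1)$, the final root already gives $1/k(k+1)$, not $1/k(k-1)$; there is no way the remaining factors can move the exponent in the right direction. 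The shift-by-$H$ device you describe is exactly what the paper deploys later in Theorem~\ref{th:Thb} for genuinely non-polynomial $f$, where Taylor truncation reduces the degree to $k-1$; for an exact polynomial of degree $k$ that reduction does not occur, and the bookkeeping you sketch cannot be balanced to the claimed bound.
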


A proof can easily be found using Montgomery's
exposition \cite[\S 4]{Mont}. Our analysis of this proof yields a 
generalization of this estimate for discrete moments of Weyl sums. 
By changing a small aspect, it comes  
with an extra-improvement, stated below as Theorem~\ref{Satz1}. 
Compared to this, Theorem
\ref{Satz2} below is just a straight forward generalization of Theorem
\ref{th:weyloriginal} that stems from Montgomery's original approach
presented in \cite[\S 4]{Mont}.

\bigskip
We give the definition of the discrete moments we look at.

\begin{definition}
\label{def1}
Let $k\geq 2$ be a fixed integer and
consider a fixed polynomial $P_{\ua}\in\R[X]$ of degree $k$ with
$P_{\ua}(0)=0$, say
\[
   P_{\ua}(X)=\alpha_{k}X^{k}+\alpha_{k-1}X^{k-1}+\dots+\alpha_{1}X
\]
with $\alpha_{1},\dots,\alpha_{k}\in \R$.
Let $x>1$ be a sufficiently large real number
and for $a\in\N$ let
\[
    S_{a}(\ua):=\sum_{m\leq x} \e(aP_{\ua}(m))
\]
be a corresponding Weyl sum of $P_{\ua}$. The twist with $a$ allows us
to consider discrete moments of the form 
\begin{equation}
\label{eq:DMWeylsum}
\sum_{a\leq T}|S_{az}(\ua)|^{2s}
\end{equation}
with large real $T>1$ and with 
fixed numbers $z,s\in\N$. 
\end{definition}
The role of $z$ is to control a possible dependence of a further
factor in the argument of the exponential. We might think of a small
$z$, or even $z=1$.

Sums of the shape \eqref{eq:DMWeylsum} occur in numerous applications,
like in Dirichlet's divisor problem, counting integer points close to curves, 
or, as we will see below, in the polynomial large sieve
inequality (for one variable polynomials). We will restrict on
presenting just the latter two applications which work well.

Our first goal is to give good estimates for the expression in
\eqref{eq:DMWeylsum} depending on $x$ and $T$.

Note that bounds for other moments can then easily be derived by H\"older's
inequality
\begin{equation}
\label{eq:Hoelder}
   \sum_{a\leq T}|S_{az}(\ua)|^{\ell}\leq T^{1-\ell/2s}
     \Big(\sum_{a\leq T} |S_{az}(\ua)|^{2s}\Big)^{\ell/2s},\ 0<\ell< 2s.
\end{equation}
Since our results use different values of $s$, it is convenient
to state bounds for the first moment, which makes the statements
easy to compare. Therefore, the results Theorem \ref{Satz1} and 
Theorem \ref{Satz2} below are stated for the first moment.

\section{Improved moment estimate}
\label{sec:WeylImpr}

We are following the estimate of Weyl sums along the
lines of Montgomery's so-called alternative derivation in \cite[\S 4.4]{Mont} 
and carry it over to the situation of discrete moments.
This approach yields 
the following result for Weyl sums as given in Definition \ref{def1}.
We call it the improved moment estimate. 
The direct approach leading to Theorem\ \ref{Satz2} 
yields a bound that is weaker in certain ranges. This is discussed in
Subsection \ref{ssec:compconj}.

\begin{theorem}[Improved moment estimate]
\label{Satz1}
Let $k\geq 3$, $s_{0}=(k-1)(k-2)/2+1$ and $u,q$ be integers with $q\geq 1$,
$(u,q)=1$ and $|\alpha_{k}-u/q|<q^{-2}$. Then
\begin{equation*}
  \sum_{a\leq T} |S_{az}(\ua)| \ll Tx
  \Big(\frac{zx^{k-1}}{q}+\frac{zx^{k-1}\log(q)}{T}
+\frac{1}{x}+\frac{q\log(q)}{Tx}\Big)^{1/2s_{0}}x^{\eps}.
\end{equation*}
\end{theorem}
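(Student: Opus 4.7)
My plan is to follow Montgomery's alternative derivation of Weyl's inequality from \cite[\S4.4]{Mont}, carried over to the moment setting by pushing the sum over $a$ through the argument, and to pick up the extra saving by deploying the $\ell_{2}$-norm of the counting function $r_{s_{0}}(\ulam)$ via Cauchy--Schwarz in place of its $\ell_{\infty}$-norm; this is the ``small aspect'' changed compared to the approach of Theorem~\ref{Satz2}.

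By H\"older's inequality with exponent $2s_{0}$ it suffices to bound the moment $\sum_{a\leq T}|S_{az}(\ua)|^{2s_{0}}$. Expanding the $2s_{0}$-th power and interchanging with the $a$-sum, the inner geometric sum over $a$ is $\ll\min(T,\|z\ua\cdot\ulam\|^{-1})$, where $\lambda_{j}=\sigma_{j}(\um)-\sigma_{j}(\un)$ with $\sigma_{j}(\um)=m_{1}^{j}+\dots+m_{s_{0}}^{j}$; grouping by the difference vector $\ulam$ via the representation function $r_{s_{0}}(\ulam)$ then gives
\[
\sum_{a\leq T}|S_{az}(\ua)|^{2s_{0}}\ll\sum_{\ulam}r_{s_{0}}(\ulam)\,\min\bigl(T,\|z\ua\cdot\ulam\|^{-1}\bigr).
\]

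Next I would apply Cauchy--Schwarz to separate the counting factor from the oscillatory one, so that $\sqrt{\sum_{\ulam}r_{s_{0}}(\ulam)^{2}}=\sqrt{J_{k}(x,2s_{0})}$ appears on one side; this is the point where the $\ell_{2}$-norm enters, and VMVT (Theorem~\ref{th:VMVT}) supplies the bound. For the complementary factor $\sum_{\ulam}M(\ulam)^{2}$ with $M(\ulam)=\min(T,\|z\ua\cdot\ulam\|^{-1})$ I would use $M^{2}\leq T\cdot M$, sum trivially over $\lambda_{1},\dots,\lambda_{k-1}$ (contributing a factor of $\ll x^{k(k-1)/2+\eps}$), and apply Lemma~\ref{lem:sumlemma} to the resulting sum in $\lambda_{k}$, after first transferring the hypothesis $|\alpha_{k}-u/q|<q^{-2}$ to an approximation of $z\alpha_{k}$ with denominator $q/\gcd(z,q)$. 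The sum lemma then produces the structural factor $(T+q\log q)(1+zx^{k}/q)$. Combining the two estimates, simplifying, and invoking Lemma~\ref{lem:sri} to absorb any self-referential term yields the bound for the $2s_{0}$-th moment; taking the $2s_{0}$-th root then gives Theorem~\ref{Satz1}.

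The hard part will be the bookkeeping associated with the $\gcd(z,q)$ factor when transferring the rational approximation of $\alpha_{k}$ to one of $z\alpha_{k}$ so that Lemma~\ref{lem:sumlemma} may legitimately be applied, and then the algebraic tidying needed to exhibit the four terms of the bracket in the form stated, in particular the interaction of the two terms coming from the $T+q\log q$ factor with the two terms coming from $1+zx^{k}/q$.
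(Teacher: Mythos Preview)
Your plan does not follow Montgomery's alternative derivation and, as written, does not produce the bound of Theorem~\ref{Satz1}.

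The heart of Montgomery's \S4.4 argument, which the paper reproduces, is the translation step: one shifts every variable in $|S_{az}(\ua)|^{2s}$ by $m=m_{1}$, so that the lower differences $d_{1},\dots,d_{k-2}$ depend only on $\uu,\uv$ and the sum over $m$ survives as a genuine linear exponential sum. Two things come out of this. First, the counting problem that remains is a $(k-2)$-dimensional Vinogradov system, so the relevant mean value is $(J_{k-2}(x,s_{0}-1)J_{k-2}(x,s_{0}))^{1/2}$, not $J_{k}(x,2s_{0})$. Second, the ``small aspect'' that gives the improvement over Theorem~\ref{Satz2} is \emph{not} an $\ell_{2}$ versus $\ell_{\infty}$ swap on $r_{s}$; it is that one now executes the geometric sum over $a$ (producing $\min(T,\|\cdot\|^{-1})$) rather than over $m$ (which in the standard approach gives $\min(x,\|\cdot\|^{-1})$), and this is what shifts the Vinogradov integral down from $J_{k-1}$ to $J_{k-2}$.

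Your direct expansion skips the translation step entirely, so you are left summing $M(\ulam)^{2}$ over the full $k$-dimensional box. After your Cauchy--Schwarz the structural factor $(T+q\log q)(zx^{k}/q+1)$ sits under a square root, and H\"older then delivers an improvement exponent $1/(4s_{0})$ on the bracket rather than the claimed $1/(2s_{0})$; concretely, for $k=3$ (so $s_{0}=2$) your bound on $\sum_{a}|S_{az}|^{4}$ is at best of order $Tx^{7/2}$ in the diagonal regime, whereas the theorem asserts $Tx^{3}$. The invocation of Lemma~\ref{lem:sri} is also misplaced: nothing self-referential arises in this argument.
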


In the bound, we ordered the factors on the right hand side:
it starts with
the trivial estimate $Tx$, then we give the improvement factor and
then a small additional factor $x^{\eps}$.

\begin{proof}
We need to introduce some of the notations from  \cite[\S 4.4]{Mont},
but writing $x$ instead of $N$.

Thus for $j\in\N$, the $j$-th power sum of a tuple
$\um=(m_{1},\dots,m_{s})\in\N^{s}$ is written as
$s_{j}(\um):=m_{1}^{j}+\dots+m_{s}^{j}$,
and the difference of two power sums as
$d_{j}=d_{j}(\uu,\uv):=s_{j}(\uu)-s_{j}(\uv)$, where
$u_{i},v_{i}\in\{-x,\dots,x\}$.

Multiplying $S_{az}(\ua)^{s}$ 
out, sorting the summands according
to the value of the power sums with power $j=1,\dots,k-2$, 
and an application of Cauchy--Schwarz's inequality yields
\begin{equation*}
  |S_{az}(\ua)|^{2s} \leq s^{k-2} x^{(k-1)(k-2)/2} \T(a)
\end{equation*}
with
\begin{align*}
\T(a)&:=\sum_{\substack{\um,\un
      \\ s_{j}(\um)=s_{j}(\un),\ j=1,\dots,k-2}} 
   \e(azP(m_{1}))
   \cdots\e(azP(m_{s}))\\ &\hspace{5cm}\cdot 
    \e(-azP(n_{1}))\cdots\e(-azP(n_{s})) \\
   &=\sum_{\substack{\um,\un
      \\ s_{j}(\um)=s_{j}(\un),\ j=1,\dots,k-2}} 
   \e\big( (s_{k}(\um)-s_{k}(\un)) az\alpha_{k} \\ &\hspace{5cm}+ 
    (s_{k-1}(\um) - s_{k-1}(\un))az\alpha_{k-1}\big) \\
   &= \sum_{\substack{\uu,\uv
      \\ d_{j}(\uu,\uv)=0,\ j=1,\dots,k-2}} 
    \e\big( d_{k} az\alpha_{k} + d_{k-1}az\alpha_{k-1}\big)
    \sum_{m\in I} \e\big(kd_{k-1}maz\alpha_{k}\big),
\end{align*}
compare \cite[Eq.\ (34)--(38) in \S 4.4]{Mont}.
Here, $m$ runs through an interval
$I=I(\uu,\uv,x)$ that contains at most $x$ many successive integers,
and we have put $m=m_{1}$, $m_{i}=m+u_{i}$ for $2\leq i\leq s$,
and $n_{i}=m+v_{i}$ for $1\leq i\leq s$. Note that the vector $\uu$
consists of one variable less, it has $s-1$ components.
In this step, all variables $m_{i}, n_{i}$ in the Vinogradov System
$s_{j}(\um)=s_{j}(\un)$, $j=1,\dots,k-2$, have been translated by
$m=m_{1}$, thus we make use of the translation invariance of the
Vinogradov system.

Now let $h=d_{k-1}\leq 2skx^{k-1}$ and sort the tuples $\uu,\uv$ 
by their value for $d_{k-1}=h$.

Then the summation of $\T(a)$ over $a\leq T$ yields
\begin{multline}
\label{eq:TSumme}
  \sum_{a\leq T}\T(a)  =  \sum_{h\ll x^{k-1}}
  \sum_{\substack{\uu,\uv\\d_{k-1}=h\\d_{j}=0,\ j=1,\dots,k-2}}
     \sum_{m\in I} \sum_{a\leq T}
  \e(az(\alpha_{k}d_{k}+\alpha_{k}khm+h\alpha_{k-1})),
\end{multline}
where the last geometric sum can be estimated by
\[
   \ll \min(T,\|\alpha_{k}zd_{k}+\alpha_{k}zkhm+zh\alpha_{k-1}\|^{-1}).
\]

In the following, the notation $\sum'$ at the sum over $\uu,\uv$ 
abbreviates the condition that $d_{j}=0$ holds for $j=1,\dots,k-2$.

Using this, we obtain
\begin{align*}
  \sum_{a\leq T} \T(a) &\ll \sum_{h\ll x^{k-1}} 
  \sideset{}{'}\sum_{\substack{\uu,\uv \\ d_{k-1}=h}}
  \sum_{m\in I} \min(T, \| \alpha_{k}zd_{k}+\alpha_{k}zkhm
     +zh\alpha_{k-1}\|^{-1}) \\
&\ll \sum_{h\ll x^{k-1}}   \sum_{m\in I'}
  \sideset{}{'}\sum_{\substack{\uu,\uv \\ d_{k-1}=h}}
 \min(T, \| \alpha_{k}zd_{k}+\alpha_{k}zkhm
     +zh\alpha_{k-1}\|^{-1}),
\end{align*}
where we extended the interval $I$ of length at most $x$ to an
interval $I'$ of length at most $3x$, in order to remove the
dependence on the variables $\uu,\uv$ except on $h$, which makes the
separation of summation possible.
We continue with
\begin{multline*}
  \sum_{a\leq T} \T(a) 
\ll
\sum_{d\ll x^{k}} \sum_{h\ll x^{k-1}}  \sum_{m\in I'}
\min(T, \|\alpha_{k}zd+\alpha_{k}zkhm+zh\alpha_{k-1}\|^{-1}) \\ 
\cdot \sideset{}{'}\sum_{\substack{\uu,\uv \\ d_{k-1}=h,\ d_{k}=d}} 1, 
\end{multline*}
and changing $hm$ to $w$ yields
\begin{align}
 \sum_{a\leq T} \T(a) 
&\ll\sum_{d\ll x^{k}} \sum_{w\ll x^{k}}
\sum_{\substack{h\mid w\\ h\ll x^{k-1}}}
\min(T, \| \alpha_{k}z(d+kw)+zh\alpha_{k-1}\|^{-1})\notag\\ &\hspace{7cm}
  \cdot  \sideset{}{'}\sum_{\substack{\uu,\uv \\ d_{k-1}=h,\ d_{k}=d}} 1\notag \\
&\ll\sum_{j\ll zx^{k}} \sum_{\substack{d\ll x^{k} \\ dz\equiv j (zk)}}
\sum_{\substack{h\ll x^{k-1}\\ h\mid (j-zd)/zk}} 
\min(T, \| \alpha_{k}j+zh\alpha_{k-1}\|^{-1})
 \sideset{}{'}\sum_{\substack{\uu,\uv \\ d_{k-1}=h,\ d_{k}=d}} 1 \notag\\
&\ll \sum_{d\ll x^{k}} \sum_{h\ll x^{k-1}} \Big( \sum_{j\ll zx^{k}}  
\min(T, \| \alpha_{k}j+zh\alpha_{k-1} \|^{-1}) \Big)
\sideset{}{'}\sum_{\substack{\uu,\uv \\ d_{k-1}=h,\ d_{k}=d}} 1, \label{eq:cest}
\end{align}
and an application of Lemma \ref{lem:sumlemma} to the sum in large
brackets yields
\begin{align*}
\sum_{a\leq T} \T(a)
&\ll \sum_{d\ll x^{k}} \sum_{h\ll x^{k-1}} (T+q\log(q))(zx^{k}/q+1)
\sideset{}{'}\sum_{\substack{\uu,\uv \\ d_{k-1}=h,\ d_{k}=d}} 1 \\
&= (T+q\log(q))(zx^{k}/q+1)\sideset{}{'}\sum_{\uu,\uv} 1,
\end{align*}
assuming that the integer $q\geq 1$ is such that there exists an integer $u$ 
with $(u,q)=1$ and $|\alpha_{k}-u/q|<q^{-2}$.

Now we shall give an estimate for the last sum.
For $\ulam\in\Z^{k-2}$ let 
\[
 r_{s-1}(\ulam):=\#\{\uu;\ u_{2}+\dots+u_{s}=\lambda_{1},\dots,\ 
     u_{2}^{k-2}+\dots+u_{s}^{k-2}=\lambda_{k-2}\}
\]
and similarly
\[
 r_{s}(\ulam):=\#\{\uv;\ v_{1}+\dots+v_{s}=\lambda_{1},\dots,\ 
     v_{1}^{k-2}+\dots+v_{s}^{k-2}=\lambda_{k-2}\}.
\]
Then the Cauchy--Schwarz inequality yields
\begin{align*}
  \sideset{}{'}\sum_{\uu,\uv} 1 &= \sum_{\substack{\uu,\uv \\
      d_{j}=0,\ j=1,\dots,k-2}} 1 =
  \sum_{\ulam\in\Z^{k-2}}r_{s-1}(\ulam)r_{s}(\ulam) \\
&\leq \Big(\sum_{\ulam}r_{s-1}(\ulam)^{2}\Big)^{1/2}
   \Big(\sum_{\ulam}r_{s}(\ulam)^{2}\Big)^{1/2}=(J_{k-2}(x,s-1)J_{k-2}(x,s))^{1/2},
\end{align*}
and so we obtain
\begin{align*}
   \sum_{a\leq T} \T(a)
&\ll (J_{k-2}(x,s-1)J_{k-2}(x,s))^{1/2}
\Big(\frac{Tzx^{k}}{q}+zx^{k}\log(q)+T+q\log(q)\Big) \\
&=Tx^{k}(J_{k-2}(x,s-1)J_{k-2}(x,s))^{1/2}
\Big(\frac{z}{q}+\frac{z\log(q)}{T}
+\frac{1}{x^{k}}+\frac{q\log(q)}{Tx^{k}}\Big).
\end{align*}
For the desired moment of Weyl sums this yields
\begin{align*}
  \sum_{a\leq T} &|S_{az}(\ua)|^{2s}  \\
&\ll Tx^{(k-1)(k-2)/2+k} 
(J_{k-2}(x,s-1)J_{k-2}(x,s))^{1/2} \\ &\hspace{6cm}
\cdot\Big(\frac{z}{q}+\frac{z\log(q)}{T}
+\frac{1}{x^{k}}+\frac{q\log(q)}{Tx^{k}}\Big) \\
&= Tx^{2s} 
\Big(\frac{J_{k-2}(x,s-1)J_{k-2}(x,s)}{x^{4s-(k-1)(k-2)-2k}}
\Big)^{1/2}\\ &\hspace{6cm}
\cdot \Big(\frac{z}{q}+\frac{z\log(q)}{T}
+\frac{1}{x^{k}}+\frac{q\log(q)}{Tx^{k}}\Big).
\end{align*}

Now that we have VMVT, Theorem \ref{th:VMVT},
at hand, we can apply the best possible
bound for the term in big brackets that includes the Vinogradov integrals.
Choosing $s=s_{0}$ with $s_{0}=(k-1)(k-2)/2+1$, we have
\begin{multline*}
  \frac{J_{k-2}(x,s-1)J_{k-2}(x,s)}{x^{4s-(k-1)(k-2)-2k}} \\ \ll
  x^{s-1}x^{2s-(k-1)(k-2)/2}x^{-4s+(k-1)(k-2)+2k}x^{\eps}=x^{2k-2+\eps}
\end{multline*}
for this value of $s$. (Note that we have $\ll x^{2k-1+\eps}$
when choosing $(k-1)(k-2)/2$ for $s$ instead, 
so the choice $s=s_{0}$ is optimal.)
We arrive at the following estimate.

\begin{equation}
\label{eq:S1}
  \sum_{a\leq T} |S_{az}(\ua)|^{2s_{0}} \ll Tx^{2s_{0}} x^{k-1}
\Big(\frac{z}{q}+\frac{z\log(q)}{T}
+\frac{1}{x^{k}}+\frac{q\log(q)}{Tx^{k}}\Big) x^{\eps}.
\end{equation}

\bigskip
Using H\"older's inequality \eqref{eq:Hoelder}, we obtain an estimate
for the first moment. In this way, we obtain 
the asserted bound from equation \eqref{eq:S1}.
\end{proof}

\bigskip
Theorem \ref{Satz1} has to be compared with the result obtained by the
straight-forward approach, that is, the following bound.

\begin{theorem}[Standard approach estimate]
\label{Satz2}
Let $k\geq 2$, $s_{1}=k(k-1)/2$ and $u,q$ 
be integers with $q\geq 1$, $(u,q)=1$ and 
$|\alpha_{k}-u/q|<q^{-2}$. Then
\begin{equation*}
  \sum_{a\leq T} |S_{az}(\ua)| \ll 
Tx \Big(\frac{z}{q}+\frac{z}{x}+\frac{q}{Tx^{k}}\Big)^{1/2s_{1}}(Txz)^{\eps}.
\end{equation*}
\end{theorem}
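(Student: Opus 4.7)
The plan is to mimic the proof of Theorem \ref{Satz1}, omitting the translation-invariance device that provides the extra improvement there. Accordingly, I would apply Cauchy--Schwarz one level deeper, sorting the power-sum multi-index by $(\lambda_1,\dots,\lambda_{k-1})$ rather than $(\lambda_1,\dots,\lambda_{k-2})$, so that after expansion all power-sum differences $d_j$ for $j<k$ are forced to vanish and only $d_k$ remains in the exponential.

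Concretely, I would first write $S_{az}(\ua)^{s_1}=\sum_{\vec\lambda} r_{s_1}(\vec\lambda)\,\e(az\sum_j \alpha_j \lambda_j)$ and apply Cauchy--Schwarz to produce a factor of order $x^{1+2+\dots+(k-1)}=x^{s_1}$, arriving at an inner expression of the shape $\T(a) = \sum_{d} M(d)\,\e(az\alpha_k d)$, where $M(d)$ counts pairs $(\um,\un) \in [1,x]^{2s_1}$ with equal power sums of degrees $1,\dots,k-1$ and with $s_k(\um)-s_k(\un)=d$. At the critical choice $s=s_1=k(k-1)/2$, Theorem \ref{th:VMVT} at degree $k-1$ yields both $\sum_d M(d)=J_{k-1}(x,s_1)\ll x^{s_1+\eps}$ and, via the classical majorization, $M(d)\leq M(0)=J_k(x,s_1)\ll x^{s_1+\eps}$. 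Summing over $a\leq T$, interchanging summations, and bounding the inner geometric series by $\min(T,\|z\alpha_k d\|^{-1})$, I would then invoke Lemma \ref{lem:sumlemma} on the one-dimensional $d$-sum (after substituting $j=zd$ and extending the range to $|j|\ll zx^k$) using the approximation $|\alpha_k-u/q|<q^{-2}$. Collecting terms and applying H\"older's inequality \eqref{eq:Hoelder} would deliver the asserted first-moment bound.

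The main obstacle, I expect, will be ensuring that the combined use of the mass estimate on the $M(d)$ and the Sum Lemma does not introduce a spurious factor of $x^k$ on the dominant term. The key is to take advantage of the fact that the weights $M(d)$ are both bounded pointwise by $J_k(x,s_1)$ and have total mass controlled by $J_{k-1}(x,s_1)$, balancing these two bounds along a natural splitting suggested by the three terms of the claimed estimate. Once this balancing is carried out---entirely parallel to the analogous step in the proof of Theorem \ref{Satz1}, but without the extra gain from translation invariance---the remainder of the computation is routine.
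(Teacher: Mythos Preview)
Your approach has a genuine gap. You propose to skip the translation-invariance device, sorting by all power sums $j=1,\dots,k-1$ so that only $d_k$ survives, and then summing the geometric series over $a$. This yields
\[
   \sum_{a\leq T}\T(a)\leq \sum_{|d|\ll x^k} M(d)\,\min\bigl(T,\|z\alpha_k d\|^{-1}\bigr),
\]
and you hope to balance the pointwise bound $M(d)\leq M(0)=J_k(x,s_1)$ against the mass bound $\sum_d M(d)=J_{k-1}(x,s_1)$. But at $s=s_1=k(k-1)/2$ both quantities are $\ll x^{s_1+\eps}$, i.e.\ of the \emph{same} order, so there is nothing to balance. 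In particular the single term $d=0$ already contributes $T\cdot J_k(x,s_1)\gg Tx^{s_1}$, hence $\sum_{a\leq T}|S_{az}(\ua)|^{2s_1}\gg Tx^{2s_1}$ after reinstating the Cauchy--Schwarz factor $x^{s_1}$. This is the trivial bound and exceeds the target $Tx^{2s_1}(z/q+z/x+q/(Tx^k))$ throughout the nontrivial range.

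The paper's proof of Theorem~\ref{Satz2} does \emph{not} omit the translation step; it still sorts by $j=1,\dots,k-2$ and shifts by $m=m_1$, exactly as in Theorem~\ref{Satz1}. The distinction between the two theorems lies only in which variable is summed as a geometric series: Theorem~\ref{Satz2} sums over the translated variable $m$, obtaining $\min(x,\|akhz\alpha_k\|^{-1})$, then collapses the remaining unweighted sums over $a$ and $h=d_{k-1}$ into a single variable $w=akhz$ and applies Lemma~\ref{lem:sumlemma} to that sum. The Vinogradov count $J_{k-1}(3x,s_1)$ then appears as a separate multiplicative factor, not as a weight trapped inside the sum-lemma application. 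This decoupling --- available only after the shift --- is precisely what your approach lacks, and it is essential for the stated bound.
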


Note that with $T=1$ and $z=1$, we get back Theorem 
\ref{th:weyloriginal} above as a special case.

\begin{proof}
Montgomery's original approach in \cite[\S 4.4,p.81,l.15]{Mont} yields
\begin{multline*}
 \sum_{a\leq T} |S_{az}(\ua)|^{2s} \\ \ll x^{(k-1)(k-2)/2}
x^{-1}J_{k-1}(3x,s)\sum_{a\leq T} \sum_{0\leq h\leq 2sx^{k-1}}
\min(x,\| akhz\alpha_{k}\|^{-1}),
\end{multline*}
where the last double sum can be estimated using Lemma \ref{lem:sumlemma}.
Together with the substitution $w=akhz$ this yields
\[
   \ll \sum_{w\leq 2sTzkx^{k-1}} \tau_{3}(w)\min(x,\|w\alpha_{k}\|^{-1})
   \ll \Big(\frac{zTx^{k}}{q} + zTx^{k-1}+q\Big)(Txz)^{\eps},
\]
where there exist integers $u,q$ with $q\geq 1$, $(u,q)=1$ and
$|\alpha_{k}-u/q|<q^{-2}$. We proceed with
\[
 \sum_{a\leq T} |S_{az}(\ua)|^{2s} \ll x^{2s}T
\Big(\frac{J_{k-1}(3x,s)}{x^{2s-k(k-1)/2}}\Big)
\Big(\frac{z}{q}+\frac{z}{x}+\frac{q}{Tx^{k}}\Big)(Txz)^{\eps}.
\]
Next, using VMVT (Theorem \ref{th:VMVT}) with the optimal 
$s=s_{1}=k(k-1)/2$ leads to the estimate
\begin{equation}
\label{eq:S2}
  \sum_{a\leq T} |S_{az}(\ua)|^{2s_{1}} \ll 
  x^{2s_{1}}T\Big(\frac{z}{q}+\frac{z}{x}+\frac{q}{Tx^{k}}\Big)(Txz)^{\eps}.
\end{equation}

Applying H\"older's inequality \eqref{eq:Hoelder} 
to equation \eqref{eq:S2} yields the desired
first moment as given in the assertion of the theorem. \end{proof}

\subsection{Comparison and conjectural considerations}
\label{ssec:compconj}

The expressions in large brackets in Theorems \ref{Satz1} and
\ref{Satz2} show the improvements compared
to the trivial estimate $Tx$. 
They lead to a nontrivial assertion
if $zx^{k-1}\lll q\lll Tx$ in Theorem \ref{Satz1} 
respectively if $z\lll q\lll Tx^{k}$ in Theorem \ref{Satz2}.
Let $s_{0}=(k-1)(k-2)/2+1$ and $s_{1}=k(k-1)/2$.
 
We compare these improvement expressions 
(supposing $z$ is small in this comparison) and obtain
the following assertions.

\begin{enumerate}[1.)]
\item 
In these expressions, we compare the typical dominant
terms,  $x^{-1/2s_{0}}$ (for $zx^{k}\ll q \ll T$)
with $(z/x)^{1/2s_{1}}$ (for $x\ll q\ll zTx^{k-1}$), 
we immediately see that Theorem
\ref{Satz1} yields a sharper estimate in the intersection range 
$zx^{k}\ll q\ll T$.

\item
The dominant term $(zx^{k-1}/q)^{1/2s_{0}}$ for $q\ll \min(zx^{k},T)$
in Theorem \ref{Satz1} is sharper than $(z/x)^{1/2s_{1}}$ for $x\ll q\ll T$,
 if $q\gg z^{\sigma}x^{k-\sigma}$ with 
\begin{equation}
\label{eq:sigma}
\sigma=\sigma_{k}:=1-s_{0}/s_{1}=2/k-2/k(k-1).
\end{equation}
To summarize, with Theorem \ref{Satz1} we obtain an improvement in the range 
$z^{\sigma}x^{k-\sigma}\ll q\ll \min(zx^{k},T)$.

\item
The dominant term $(zx^{k-1}/T)^{1/2s_{0}}$ for $T\ll q\ll zx^{k}$
is sharper than $(z/x)^{1/2s_{1}}$ for $x\ll q\ll zTx^{k-1}$ if $T\gg
z^{\sigma}x^{k-\sigma}$. Thus in the intersection range $T\ll q
\ll zx^{k}$ we obtain an improvement.

\item
The dominant term $(q/Tx)^{1/2s_{0}}$ for $q\gg \max(T,zx^{k})$ 
is sharper than $(z/x)^{1/2s_{1}}$ for $x\ll q\ll zTx^{k-1}$ if $q\ll
Tx^{\sigma}z^{1-\sigma}$. Thus in the intersection range
$\max(T,zx^{k})\ll q\ll Tx^{\sigma}z^{1-\sigma}$, for which $T\gg
z^{\sigma}x^{k-\sigma}$ has to hold necessarily, we obtain an improvement.
\end{enumerate}

To summarize, Theorem \ref{Satz1} yields an improvement 
only if $T\gg z^{\sigma}x^{k-\sigma}$, so
this term $z^{\sigma}x^{k-\sigma}$ turns out to be a critical value
for $T$ from which on we obtain improvements.
Moreover, above conditions on $q$ have to hold, that is the range
\begin{equation}
\label{eq:impr_range}
z^{\sigma}x^{k-\sigma}\ll q\ll Tx^{\sigma}z^{1-\sigma}.
\end{equation}
For any other $q$, Theorem \ref{Satz2} gives a sharper bound.

\bigskip
An observation is that in \eqref{eq:cest} we made a very coarse
estimate. Heuristically, one would expect that it could be doable
with the mean value over $h$. This would provide a gain of an extra
factor $x^{k-1}$ in the estimate. In this way, 
we would save it also in \eqref{eq:S1} and
arrive at the following conjectural bound.

\begin{conjecture}
  \label{conj:c}
For $k\geq 3$, $s_{0}=(k-1)(k-2)/2+1$, and  $|\alpha_{k}-a/q|<q^{-2}$ for
$(a,q)=1$, the estimate
\begin{equation*}
  \sum_{a\leq T} |S_{az}(\ua)| \ll Tx
  \Big(\frac{z}{q}+\frac{z\log(q)}{T}
+\frac{1}{x^{k}}+\frac{q\log(q)}{Tx^{k}}\Big)^{1/2s_{0}}(Txz)^{\eps}
\end{equation*}
is conjectured to hold true.
\end{conjecture}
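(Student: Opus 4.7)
The plan is to rerun the proof of Theorem \ref{Satz1} verbatim up through the geometric-series step \eqref{eq:TSumme} and the change of variables $w=hm$, so that one arrives (just before the coarse step \eqref{eq:cest}) at
\[
  \sum_{a\leq T}\T(a)\ll \sum_{d\ll x^k}\sum_{w\ll x^k}\sum_{\substack{h\mid w\\ h\ll x^{k-1}}}\min\bigl(T,\|\alpha_k z(d+kw)+zh\alpha_{k-1}\|^{-1}\bigr)\sideset{}{'}\sum_{\substack{\uu,\uv\\ d_{k-1}=h,\,d_k=d}}1,
\]
and then to replace the coarse passage in \eqref{eq:cest}—which drops the divisibility restriction $h\mid w$ and enlarges the triple sum by a factor $\asymp zx^{k-1}$—by a genuine mean-value estimate. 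The target is precisely to remove the extraneous factor $x^{k-1}$ appearing in equation \eqref{eq:S1}; after that, H\"older's inequality \eqref{eq:Hoelder} yields the conjectured first-moment bound in the same final step as in the proof of Theorem \ref{Satz1}.

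The refinement I would attempt is to invert the order of summation via $w=hm$ with $m\ll x^k/h$, so that the phase becomes the linear progression $\alpha_kzd+\alpha_kzkhm+zh\alpha_{k-1}$ in $m$ for each fixed $h$. Applying Lemma \ref{lem:sumlemma} to this $m$-sum, for $h$ coprime to $q$ (which holds for almost all $h\ll x^{k-1}$), yields the improved denominator $x^k/(qh)$ in place of the $x^k/q$ produced in \eqref{eq:cest}. Summing against the weight $r(d,h):=\sideset{}{'}\sum_{\uu,\uv;\,d_{k-1}=h,\,d_k=d}1$ then hinges on an evaluation of $\sum_{d,h}r(d,h)/h$: under the heuristic that the values of $h=d_{k-1}$ are distributed roughly uniformly over $|h|\ll x^{k-1}$ among the solutions of the truncated Vinogradov system (those with $d_j=0$ for $j=1,\dots,k-2$), one obtains $\sum_{d,h}r(d,h)/h\ll x^\eps(J_{k-2}(x,s-1)J_{k-2}(x,s))^{1/2}/x^{k-1}$, which, combined with the unchanged VMVT evaluation (Theorem \ref{th:VMVT}) and the choice $s=s_0$, delivers the desired saving of $x^{k-1}$.

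The main obstacle is twofold. First, the $+1$ term in Lemma \ref{lem:sumlemma}, when summed over $d\ll x^k$ and $h\ll x^{k-1}$ against $r(d,h)$, contributes of order $x^{2k-1}$ and reproduces the coarse estimate; treating it requires either a sharper one-variable bound honest about the range of $m$ for each $h$, or a genuinely bilinear substitute for Lemma \ref{lem:sumlemma} of large-sieve flavour, handling
\[
  \sum_{w\ll x^k}\sum_{\substack{h\mid w\\ h\ll x^{k-1}}}\min\bigl(T,\|\alpha h+\beta(w)\|^{-1}\bigr)
\]
uniformly in $\alpha,\beta$ under nothing beyond the Diophantine approximation to $\alpha_k$. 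Second, the mean-value claim on the distribution of $d_{k-1}$ over Vinogradov solutions---though plausible---is a genuine statement about the system on $k-2$ equations and does not follow from Theorem \ref{th:VMVT} alone. Making either step rigorous uniformly in $q$ and in the two independent parameters $\alpha_k,\alpha_{k-1}$ is what places the statement beyond the present techniques, and is precisely the reason it is recorded as a conjecture rather than a theorem.
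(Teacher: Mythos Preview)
Your proposal is appropriate and matches the paper's own treatment: this statement is a \emph{conjecture}, not a theorem, and the paper does not prove it. The paper's entire justification is the one-sentence heuristic preceding the conjecture---that the coarse estimate at \eqref{eq:cest} should be replaceable by a genuine mean value over $h$, saving the factor $x^{k-1}$ in \eqref{eq:S1}. You correctly locate the key step, give the same heuristic saving, and then go further than the paper by spelling out two concrete obstructions (the $+1$ term in Lemma~\ref{lem:sumlemma} and the equidistribution of $d_{k-1}$ over truncated Vinogradov solutions) that the paper does not articulate. Your concluding sentence---that these obstacles are precisely why the statement is recorded as a conjecture---is exactly right.
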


Compared to Theorem \ref{Satz1}, this would lead
to an improvement factor
$x^{-k/2s_{0}}$ (around $x^{-1/k}$) instead of $x^{-1/2s_{0}}$, provided that the secondary
terms do not matter. 
It is interesting to see that we can indeed improve further towards 
Conjecture~\ref{conj:c} if we assume suitable 
rational approximations to $\alpha_{k}$ and $\alpha_{k-1}$ as follows.

\begin{theorem}[Second improved moment estimate]
\label{Satz3}
Let $k\geq 2$, $s_{2}=k(k-1)/2+1$ and $u,q$ be integers with $q\geq 1$,
$(u,q)=1$ and $|\alpha_{k}-u/q|<q^{-1}T^{-1}$. Further, let $v,w$ be
integers with $1\leq w\leq x^{k-1}z$ and $|zq\alpha_{k-1}-v/w|<w^{-2}$.
Then
\begin{equation*}
  \sum_{a\leq T} |S_{az}(\ua)| \ll Tx
\Big(\frac{x^{k-1}}{qw}  +
\frac{x^{k-1}}{T}+ \frac{1}{xw}+\frac{q}{Tx}\Big)^{1/2s_{2}}x^{\eps}.
\end{equation*}
\end{theorem}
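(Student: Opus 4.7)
The plan is to execute precisely the refinement of equation~\eqref{eq:cest} advertised in the discussion preceding Conjecture~\ref{conj:c}: in place of the uniform-in-$h$ estimate of the inner $j$-sum (Lemma~\ref{lem:sumlemma}), I apply Lemma~\ref{lem:varsumlemma}, whose hypothesis matches the strengthened Diophantine condition $|\alpha_{k}-u/q|<q^{-1}T^{-1}$ assumed here; I then exploit the resulting mean over $h$ by a second application of Lemma~\ref{lem:sumlemma}, this time in the $h$-variable, using the additional hypothesis $|zq\alpha_{k-1}-v/w|<w^{-2}$. The net effect is to save an extra factor $w$ at the critical step of the Theorem~\ref{Satz1} argument.

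Concretely, I repeat the derivation of Theorem~\ref{Satz1} through \eqref{eq:cest}. Partitioning the $j$-range $[1,zx^{k}]$ into $O(zx^{k}/q+1)$ blocks of length at most $q$ and applying Lemma~\ref{lem:varsumlemma} with $X=T$ on each block (the additive drift between consecutive blocks is $q(\alpha_{k}-u/q)=O(1/T)$, absorbed by the $\min$ with $T$) yields
\[
  \sum_{j\ll zx^{k}}\min(T,\|\alpha_{k}j+zh\alpha_{k-1}\|^{-1})
  \ll \Bigl(\frac{zx^{k}}{q}+1\Bigr)\bigl(\min(T,q\|zhq\alpha_{k-1}\|^{-1})+q\log q\bigr).
\]
I sum over $d$ to collapse $N(d,h):=\sideset{}{'}\sum_{d_{k-1}=h,\,d_{k}=d}1$ into $N(h)=\sum_{d}N(d,h)$, and handle the residual $h$-sum $\sum_{h}N(h)\min(T,q\|zhq\alpha_{k-1}\|^{-1})$ by Cauchy--Schwarz: the factor $(\sum_{h}N(h)^{2})^{1/2}$ reduces, after the relabeling $\ux=(\uv,\uu')$, $\tq=(\uu,\uv')$, to a $J_{k-1}$-type Vinogradov count in $2s-1$ variables bounded via Theorem~\ref{th:VMVT}, while $(\sum_{h}\min(\cdot)^{2})^{1/2}\le (T\sum_{h}\min(\cdot))^{1/2}$ is treated by Lemma~\ref{lem:sumlemma} (after factoring out $q$) with $\alpha=zq\alpha_{k-1}$ and the approximation $|zq\alpha_{k-1}-v/w|<w^{-2}$, producing the $w$-saving bound $(T+qw\log w)(x^{k-1}/w+1)$.

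Invoking Theorem~\ref{th:VMVT} with $s=s_{2}=k(k-1)/2+1$ — one past the critical value for $J_{k-1}$ — balances the $x$-exponent and yields a bound on the $2s_{2}$-th moment; H\"older's inequality~\eqref{eq:Hoelder} then gives the first-moment bound of the statement. The four terms inside the parentheses correspond respectively to the main improvement $x^{k-1}/(qw)$ from the combined Lemma~\ref{lem:varsumlemma}/Lemma~\ref{lem:sumlemma} chain, the $x^{k-1}/T$ error from the $q\log q$ remainder in Lemma~\ref{lem:varsumlemma}, the $1/(xw)$ error from the secondary ``$+1$'' in the $(x^{k-1}/w+1)$ factor, and the $q/(Tx)$ error from the $qw\log w$ in the $(T+qw\log w)$ factor. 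The main obstacle is the Cauchy--Schwarz bookkeeping: the $J_{k-1}$-count emerging from $\sum_{h}N(h)^{2}$ is exactly what forces the critical balance point to jump from $s_{0}$ (used in Theorem~\ref{Satz1}) up to $s_{2}$, and one must verify carefully that the $w$-saving survives intact and is not eroded by the $q\log q$ and $qw\log w$ remainders that are generated along the way.
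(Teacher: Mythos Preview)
Your overall architecture is right — pass through \eqref{eq:cest}, block the $j$-sum into pieces of length $q$, apply Lemma~\ref{lem:varsumlemma} using the hypothesis $|\alpha_k-u/q|<q^{-1}T^{-1}$, and then exploit the resulting term $\min(T,q\|zhq\alpha_{k-1}\|^{-1})$ via a second application of Lemma~\ref{lem:sumlemma} in the $h$-variable with the approximation $|zq\alpha_{k-1}-v/w|<w^{-2}$. That is exactly the mechanism of the paper.

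The gap is in how you decouple $N(h):=\sideset{}{'}\sum_{d_{k-1}=h}1$ from $M(h):=\min(T,q\|zhq\alpha_{k-1}\|^{-1})$. You use Cauchy--Schwarz, bounding $\sum_h N(h)M(h)$ by $(\sum_h N(h)^2)^{1/2}(\sum_h M(h)^2)^{1/2}$ and then $\sum_h M(h)^2\le T\sum_h M(h)$. This is lossy: the $h$-sum of $M(h)$ produced by Lemma~\ref{lem:sumlemma} carries the full saving factor $w^{-1}$, but after your square-root it becomes only $w^{-1/2}$, while your bound $\sum_h N(h)^2\le J_{k-1}(cx,2s-1)$ (obtained by relaxing the two separate $(k-2)$-systems to a single $(k-1)$-system in $2s-1$ variables) is wasteful — indeed for $s=s_2$ it is \emph{worse} than the trivial $(\max_h N(h))^2\cdot x^{k-1}$. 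A short computation shows your Cauchy--Schwarz product exceeds the target $\max_h N(h)\cdot\sum_h M(h)$ by a factor $\gg w^{1/2}$, so the leading term you obtain is $x^{k-1}/(qw^{1/2})$ rather than $x^{k-1}/(qw)$, and the stated bound is not achieved.

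The paper avoids this by using the $L^\infty$--$L^1$ H\"older split instead: it pulls out $\max_{h_0}\sideset{}{'}\sum_{d_{k-1}=h_0}1$ and bounds it \emph{uniformly in $h_0$} by $(J_{k-1}(x,s-1)J_{k-1}(x,s))^{1/2}$ via Cauchy--Schwarz on the representation counts $\tilde r_{s-1}(\ulam,0)\tilde r_s(\ulam,h_0)$ (the extra fixed equation $d_{k-1}=h_0$ is what forces the jump from $J_{k-2}$ to $J_{k-1}$ and hence from $s_0$ to $s_2$). This leaves the full $L^1$-sum $\sum_{h}\sum_j\min(\cdot)$ intact, to which Lemma~\ref{lem:varsumlemma} and then Lemma~\ref{lem:sumlemma} are applied, and the full $w$-saving survives to give the bracket $x^{k-1}/(qw)+x^{k-1}/T+1/(xw)+q/(Tx)$. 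So your plan is essentially correct except at the decoupling step: replace your Cauchy--Schwarz by the uniform pointwise bound on $N(h)$, and the proof goes through.
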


\begin{proof}
We start as in the proof of Theorem~\ref{Satz1}, but continue \eqref{eq:cest}
with
\begin{align}
\label{eq:eq4neu}
 \sum_{a\leq T} \T(a) 
&\ll \sum_{d\ll x^{k}} \sum_{h\ll x^{k-1}} \Big( \sum_{j\ll zx^{k}}  
\min(T, \| \alpha_{k}j+hz\alpha_{k-1} \|^{-1}) \Big) \notag
\sideset{}{'}\sum_{\substack{\uu,\uv \\ d_{k-1}=h,\ d_{k}=d}} 1 \\
&\ll \Big(\sum_{h\ll x^{k-1}}  \sum_{j\ll zx^{k}}  
\min(T, \| \alpha_{k}j+zh\alpha_{k-1} \|^{-1}) \Big)
\max_{h_{0}}\sideset{}{'} \sum_{\substack{\uu,\uv \\ d_{k-1}=h_{0}}} 1.
\end{align}
To handle the last sum, let
\begin{multline*}
 \tilde{r}_{s}(\ulam,h_{0}):=\#\{\uv;\ v_{1}+\dots+v_{s}=\lambda_{1},\dots,\ 
     v_{1}^{k-2}+\dots+v_{s}^{k-2}=\lambda_{k-2},\\
  v_{1}^{k-2}+\dots+v_{s}^{k-2}  =\lambda_{k-2}+h_{0}\},
\end{multline*}
we obtain
\begin{align*}
  \sideset{}{'} \sum_{\substack{\uu,\uv \\ d_{k-1}=h_{0}}} 1
  &= \sum_{\ulam\in\Z^{k-1}}
  \tilde{r}_{s-1}(\ulam,0)\tilde{r}_{s}(\ulam,h_{0})   \\ &\leq
  \Big(\sum_{\ulam}\tilde{r}_{s-1}(\ulam,0)^{2}\Big)^{1/2}
  \Big(\sum_{\ulam}\tilde{r}_{s}(\ulam,h_{0})^{2}\Big)^{1/2}\\
  &=\Big(J_{k-1}(x,s-1)J_{k-1}(x,s)\Big)^{1/2},
\end{align*}
uniformly in $h_{0}$. Now we turn to the sum over $j,h$ in
\eqref{eq:eq4neu}. 
For each block $B=[1+bq,\dots,q-1+bq]$ of consecutive positive
integers with $b\geq 1$, we have 
\[
\sum_{j\in B}\min(T,\|j\alpha_{k}+hz\alpha_{k-1}\|^{-1})
\ll \sum_{1\leq j\leq q} \min(T,\|j\alpha_{k} +hz\alpha_{k-1}\|^{-1}),
\]
since $\min(T,\|j\alpha_{k}+q\alpha_{k}+hz\alpha_{k-1}\|^{-1})\ll
\min(T, \|j\alpha_{k} +hz\alpha_{k-1}\|^{-1})$ holds true 
under the assumption $\|q\alpha_{k}\|<T^{-1}$. 
Therefore  
\begin{align*}
\sum_{h\ll x^{k-1}}  &\sum_{j\ll zx^{k}}  
\min(T, \| \alpha_{k}j+hz\alpha_{k-1} \|^{-1})  \\
&\ll  
(zx^{k}/q+1) \sum_{h\ll x^{k-1}} \sum_{1\leq j\leq q}
\min(T, \|\alpha_{k}j+hz\alpha_{k-1} \|^{-1}) \\
&\ll 
(zx^{k}/q+1) \sum_{h\ll x^{k-1}} (\min(T,q\|hzq\alpha_{k-1}\|^{-1}) +
q\log q),
\end{align*}
where we applied Lemma~\ref{lem:varsumlemma} in the last step.
Writing $m=hz$ and noting that the number of divisors of $m$ is
$\ll x^{\eps}$, we continue using
Lemma~\ref{lem:sumlemma} by
\begin{multline*}
  \ll (zx^{k}/q+1) x^{\eps} q(T/q + w\log w)(x^{k-1}z/w+1) 
 + zx^{2k-1+\eps}+x^{k-1+\eps}q\\
    \ll x^{\eps}z^{2} (x^{k}+q) (T/q+w)x^{k-1}/w  +
    zx^{2k-1+\eps}+x^{k-1+\eps}q \\  \ll 
   x^{\eps}z^{2} Tx^{k}
   \Big(\frac{x^{k-1}}{qw}+\frac{x^{k-1}}{T}+\frac{1}{xw}+\frac{q}{Tx}\Big),
\end{multline*}
supposing $w\leq x^{k-1}z$.
Then together with \eqref{eq:eq4neu}, we arrive at
\begin{multline*}
  \sum_{a\leq T} \T(a)  \ll
  x^{\eps}x^{k}T\Big(\frac{x^{k-1}}{qw}  +
\frac{x^{k-1}}{T}+ \frac{1}{xw}+\frac{q}{Tx}\Big)
  (J_{k-1}(x,s-1)J_{k-1}(x,s))^{1/2}.
\end{multline*}
For the desired moment of Weyl sums this yields now 
\begin{align*}
  \sum_{a\leq T} &|S_{az}(\ua)|^{2s}  \\
&\ll Tx^{(k-1)(k-2)/2+k+\eps} 
(J_{k-1}(x,s-1)J_{k-1}(x,s))^{1/2} \\ &\hspace{1cm}
\cdot \Big(\frac{x^{k-1}}{qw}  +
\frac{x^{k-1}}{T}+ \frac{1}{xw}+\frac{q}{Tx}\Big) \\
&= Tx^{2s+\eps} 
\Big(\frac{J_{k-1}(x,s-1)J_{k-1}(x,s)}{x^{4s-(k-1)(k-2)-2k}}
\Big)^{1/2} 
\cdot \Big(\frac{x^{k-1}}{qw} +
\frac{x^{k-1}}{T}+ \frac{1}{xw} +\frac{q}{Tx}\Big).
\end{align*}
Again, we need to choose the optimal parameter $s$ which fits best
with the Vinogradov integrals. This is provided by
the choice $s_{2}=k(k-1)/2+1$, an application of VMVT (Theorem \ref{th:VMVT})
yields
\[
\frac{J_{k-1}(x,s-1)J_{k-1}(x,s)}{x^{4s-(k-1)(k-2)-2k}}\ll 
 x^{s-1}x^{2s-k(k-1)/2}x^{-4s+(k-1)(k-2)+2k}\ll 1.
\]
Thus applying H\"older's inequality, we arrive at the assertion
\[
    \sum_{a\leq T} |S_{az}(\ua)| \ll Tx 
\Big(\frac{x^{k-1}}{qw}  +
\frac{x^{k-1}}{T}+ \frac{1}{xw}+\frac{q}{Tx}\Big)^{1/2s_{2}}.
\]
\end{proof}

We see that in the setting of Theorem~\ref{Satz3} we improved the term
$1/x$ by $1/xw$, where $w$ may be taken as large as $x^{k-1}$.
This would allow a saving of up to $x^{-1/k}$ in the estimate, namely
\[
   x^{-k/2s_{2}}=x^{-k/(k(k-1)+2)}=x^{-1/(k-1+2/k))}\ll x^{-1/k},
\]
assuming best parameter choices for $q$ and $T$ (say $x^{k}\leq
q$ and $T\geq qw$).
Like this, we come close to Conjecture~\ref{conj:c}, but 
the assumptions on $\alpha_{k}$ and $\alpha_{k-1}$ are more restrictive.

Theorem~\ref{Satz3} also suggests that there might be limitations to
Conjecture~\ref{conj:c}, such as if $\alpha_{k-1}$ is close to 
$0$ mod $1$ or has good approximation to a rational $v/w$ with small
denominator $w$. In cases like these it seems that we may not estimate the sum
in \eqref{eq:cest} much better than the way we proceed.

\section{Discrete moments of exponential sums}
\label{sec:mvexposums}

We turn now to discrete moments of general exponential sums
with smooth functions $f$. The main idea is to approximate $f$ with a
polynomial using Taylor's theorem and apply the bounds of the previous
sections.

We proceed similar as in Bordell\`es' book \cite[\S 6.6.7]{Bor},
or in Heath-Brown's recent article \cite{HB-V}.
The first result is as follows.
\begin{theorem}
\label{th:Thb}
  Let $N$ be a large positive integer, 
and let $f\in\C^{k}((0,3N))$, $k\geq 3$.
Suppose that there exists real numbers
$\lambda,A$ such that $0<\lambda\leq f^{(k)}(x)\leq A\lambda$ holds for
all $x\in (0,3N)$. Let $\rho=1/((k-2)(k-3)+2)$ and
$\mu=1+A\lambda N$.
Let $z$ be a positive integer that is considered
to be small, and let $T$ be a real number with 
$N^{-k}(zA\lambda)^{-1}\leq T\leq (zA\lambda)^{-1}$.
Then
\begin{multline}
\label{eq:Thb}
  \sum_{a\leq T} \Big|\sum_{N<m<2N} \e(azf(m))\Big| 
\ll NT(zA\lambda T)^{\rho/k+\eps} + T(zA\lambda T)^{-1/k} \\
+T\mu z^{2}(zA\lambda T)^{2/k-2} 
  + \mu z (zA\lambda T)^{1/k-1}\lambda^{-1}.
\end{multline}
\end{theorem}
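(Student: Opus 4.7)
The plan is to approximate $f$ by its degree-$(k-1)$ Taylor polynomial on short subintervals of $(N,2N)$ and then invoke Theorem~\ref{Satz1} (applied with its parameter $k$ replaced by $k-1$, so that $s_{0}=(k-2)(k-3)/2+1$ and $1/(2s_{0})=\rho$) on each subinterval separately. Partition $(N,2N)$ into $\ll 1+N/H$ subintervals $I_{j}$ of length at most $H\asymp (zA\lambda T)^{-1/k}$ centred at points $x_{j}$; the assumption $N^{-k}(zA\lambda)^{-1}\le T\le (zA\lambda)^{-1}$ ensures $1\le H\le N$. Taylor's theorem yields $f(m)=P_{j}(m)+R_{j}(m)$ on $I_{j}$, where $\deg P_{j}=k-1$ with leading coefficient $\alpha_{j}:=f^{(k-1)}(x_{j})/(k-1)!$ and $|R_{j}(m)|\ll A\lambda H^{k}$. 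Since $|azR_{j}(m)|\ll TzA\lambda H^{k}\ll1$, the replacement of $\e(azf(m))$ by $\e(azP_{j}(m))$ introduces only a controllable error, and the trivial bound $HT$ on an incomplete terminal piece accounts for the summand $T(zA\lambda T)^{-1/k}$ in \eqref{eq:Thb}.

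For each $j$, apply Dirichlet's theorem to $\alpha_{j}$ with parameter $Q$, producing $(u_{j},q_{j})=1$, $1\le q_{j}\le Q$, and $|\alpha_{j}-u_{j}/q_{j}|<(q_{j}Q)^{-1}\le q_{j}^{-2}$. Then Theorem~\ref{Satz1} gives
\begin{equation*}
\sum_{a\le T}\Big|\sum_{m\in I_{j}}\e(azP_{j}(m))\Big|\ll HT\Big(\tfrac{zH^{k-2}}{q_{j}}+\tfrac{zH^{k-2}\log q_{j}}{T}+\tfrac{1}{H}+\tfrac{q_{j}\log q_{j}}{TH}\Big)^{\rho}H^{\eps}.
\end{equation*}
On the \emph{good} subintervals, where $q_{j}$ is large enough that $zH^{k-1}\ll q_{j}\ll T$ and the bracket is dominated by the term $1/H$, summation over $j$ yields $\ll NT(1/H)^{\rho}H^{\eps} = NT(zA\lambda T)^{\rho/k+\eps}$, the leading term of \eqref{eq:Thb}.

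The main obstacle is the contribution from the \emph{bad} subintervals, where $q_{j}$ is too small for Theorem~\ref{Satz1} to improve on the trivial bound $HT$. For those indices I would revert to the trivial inner bound and count them via Lemma~\ref{lem:smf}: the auxiliary function $g_{q}(x):=q\,f^{(k-1)}(x)/(k-1)!$ has derivative in $[q\lambda/(k-1)!,\,qA\lambda/(k-1)!]$, so, after accounting for the $H$-spacing of the centres $x_{j}$, the number of bad $j$ with $q_{j}\le Q_{0}$ is bounded by $H^{-1}\sum_{q\le Q_{0}}(1+qA\lambda N)(1+(qQ\lambda)^{-1})$. Multiplying by $HT$, summing dyadically over $q$, and optimising the thresholds $Q_{0}$ and $Q$ should produce the remaining terms $T\mu z^{2}(zA\lambda T)^{2/k-2}$ and $\mu z(zA\lambda T)^{1/k-1}\lambda^{-1}$ of \eqref{eq:Thb}; Lemma~\ref{lem:sri} can be invoked at the end to dispose of any residual self-referential dependence on the sum being estimated.
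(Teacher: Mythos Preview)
Your overall strategy coincides with the paper's: reduce to Weyl sums of length $H\asymp(zA\lambda T)^{-1/k}$ via a degree-$(k-1)$ Taylor approximation, apply Theorem~\ref{Satz1} on the ``minor arc'' pieces where the denominator $q$ lies in $[zH^{k-1},T]$, and count the remaining ``major arc'' pieces via Lemma~\ref{lem:smf}. The paper, however, carries this out with a Weyl shift (averaging over translations $h\le H$ so that the anchor points $m$ range over \emph{all} integers in $(N,2N)$ with weight $H^{-1}$) rather than a disjoint partition into $\asymp N/H$ blocks. That choice is not cosmetic: it makes the two steps you gloss over work cleanly, and as written your treatment of them is gapped.

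First, the sentence ``since $|azR_{j}(m)|\ll1$ the replacement of $\e(azf(m))$ by $\e(azP_{j}(m))$ introduces only a controllable error'' is not sufficient. The termwise error $|\e(azR_{j}(m))-1|\ll azA\lambda H^{k}$, summed over $m\in I_{j}$, over $a\le T$, and over the $N/H$ blocks, is $\ll T^{2}zA\lambda H^{k}\cdot N\asymp NT$, i.e.\ the trivial bound. What is actually needed is partial summation using $|R_{j}'|\ll A\lambda H^{k-1}$; this is exactly what the paper does in passing from $\Ll_{f}$ to $\Ss_{1}+\Ss_{2}$, and it forces one to control $\sum_{a\le T}\bigl|\sum_{h\le x}\e(azQ_{m}(h))\bigr|$ uniformly for all $x\le H$ (hence the integral $\int_{1}^{H}\ldots\,dx$ in the paper, weighted by the extra factor $zA\lambda H^{k-2}T$).

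Second, your factor $H^{-1}$ in the count of bad centres is not a consequence of Lemma~\ref{lem:smf}: that lemma applied to integers in $(N,2N)$ does \emph{not} in general yield the $H$-spaced count after division by $H$ (the ``$+1$'' per major-arc interval survives). If instead you apply Lemma~\ref{lem:smf} to $j\mapsto f^{(k-1)}(N+jH)/(k-1)!$ you get the rigorous bound $(1+A\lambda N)(1+(qQ\lambda H)^{-1})$ per pair $(u,q)$. It so happens that for $q\le zH^{k-1}$, $Q=T$, and $H=(zA\lambda T)^{-1/k}$ one has $qT\lambda H\ll A^{-1}\le1$, so both expressions agree and your heuristic division by $H$ gives the right answer; but this needs to be argued. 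The paper avoids the issue entirely because its anchor points are integers and the $H^{-1}$ comes from the Weyl shift itself. (Incidentally, Lemma~\ref{lem:sri} plays no role in this proof; it is used later, in Theorem~\ref{th:thR}.)
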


We note that $\lambda$ as well as $A$ and $z$ may depend on $N$
and $T$. In the case if $A$ and $z$ is depending on $k$ only, we may hide $A$
and $z$ in the implicit constant leading to a slightly easier expression.
Additionally assuming $\mu=1$, the upper bound simplifies to
\begin{equation}
\label{eq:Thbstr}
 NT(\lambda T)^{\rho/k}+ T(\lambda T)^{-1/k}  
+ T(\lambda T)^{2/k-2} 
  + (\lambda T)^{1/k-1}\lambda^{-1}.
\end{equation}

The proof uses an adapted circle method. 
The first term in the bound \eqref{eq:Thb} respectively \eqref{eq:Thbstr}
comes from the the minor arc contribution, the
second gives a trivial contribution from a Weyl-shift, 
and the last two terms come from the major arc contribution.

\begin{proof}
Let $\Ll_{f}$ denote the left hand side of \eqref{eq:Thb}.

We start with a Weyl-shift with $1\leq H\leq N$. For this,
let $\beta_{m}=\e(azf(m))$ if $N<m<2N$, and $\beta_{m}=0$
otherwise. Then for each $h'\leq H$,
\begin{multline*}
    \sum_{N<m<2N} \e(azf(m)) = \sum_{m\in\Z} \beta_{m+h'} =
    \frac{1}{H}\sum_{h\leq H}\ \sum_{m\in\Z} \beta_{m+h}  \\
    = \frac{1}{H}\sum_{m=N-H+1}^{2N-1}\ \sum_{h\leq H} \beta_{m+h}
    = \frac{1}{H}\sum_{m=N+1}^{2N-1}\ \sum_{h\leq H} \e(azf(m+h)) +O(H).
\end{multline*}

We obtain
\begin{equation}
\label{eq:Lfglg}
  \Ll_{f} =\frac{1}{H} \sum_{a\leq T}\ \sum_{N<m<2N}\Big|\sum_{h\leq H}
  \e(azf(m+h))\Big| + O(TH).
\end{equation}

An application of Taylor's theorem provides the expansion
$f(m+h)=Q_{m}(h)+u_{m}(h)$ with
\[
Q_{m}(h)=hf'(m)+h^{2}f''(m)/2!+\dots+h^{k-1}f^{(k-1)}(m)/(k-1)!.
\]
Note that $f^{(k-1)}(m)/(k-1)!$ is the leading
coefficient of this polynomial of degree $k-1$ in $h$, and that
\[
  u_{m}(h)=f(m)+\frac{1}{(k-1)!}\int_{0}^{h} (h-v)^{k-1}f^{(k)}(m+v)dv,
\]
so that  $\e(azf(m+h))=\e(azQ_{m}(h))\e(azu_{m}(h))$.

We separate the exponential expressions containing $Q_{m}$ and $u_{m}$
by partial summation, this yields
\[
  \Ll_{f} \leq \Ss_{1}+\Ss_{2}+O(TH)
\]
with
\begin{align*}
  \Ss_{1} &\leq \frac{1}{H} \sum_{N<m<2N} \sum_{a\leq T} 
   \Big|\sum_{h\leq H}\e(azQ_{m}(h))\Big| \\
\end{align*}
and
\begin{align*}
  \Ss_{2} &\leq  \sum_{a\leq T} \frac{2\pi az}{H}\sum_{N<m<2N}
  \int_{1}^{H} \Big|\sum_{h\leq x} \e(azQ_{m}(h))\Big| \cdot |u'_{m}(x)|\dx \\
   &\ll z H^{k-2}  \sum_{N<m<2N}\int_{1}^{H}\sum_{a\leq T}a
   \Big|\sum_{h\leq x} \e(azQ_{m}(h))\Big| \dx \cdot 
   \sup_{v\in (0,H)} |f^{(k)}(m+v)| \\
 &\ll zA\lambda H^{k-2} \int_{1}^{H} \sum_{N<m<2N}
    \sum_{a\leq T}a
   \Big|\sum_{h\leq x} \e(azQ_{m}(h))\Big| \dx. 
 \end{align*}
Next, we abbreviate
\[
S_{a,m}(x):= \sum_{h\leq x} \e(azQ_{m}(h)),
\]
summarize the bounds for $\Ss_{1}$ and $\Ss_{2}$ and arrive at
\begin{multline}
\label{eq:Lfbound}
   \Ll_{f} \ll TH + \frac{1}{H} \sum_{N<m<2N} \sum_{a\leq T}
   |S_{a,m}(H)| \\ + zA\lambda H^{k-2}T
     \int_{1}^{H} \sum_{N<m<2N}\sum_{a\leq T}  |S_{a,m}(x)| \dx.
\end{multline}

For the next argument,
fix $x$ with $x\leq H\leq N$
and let $\Delta_0:=z^{-1}T^{-1}H^{1-k}$. 
Consider $m\in(N,2N)\cap\Z$ and let
\[
   \Aa_{m}:=\Big\{\alpha\in[0,1];\  \Big\|
   \frac{f^{(k-1)}(m)}{(k-1)!}-\alpha \Big\|\leq \Delta_0 \Big\}.
\]

Fix an $\alpha\in \Aa_{m}$.

We replace the leading coefficient in $Q_{m}(h)$ by $b_{k-1}\in\R$ such
that $ \frac{f^{(k-1)}(m)}{(k-1)!}-b_{k-1}\in\Z$, so that
$|b_{k-1}-\alpha|\leq \Delta_0$. Like this, we look at
\[
   f_{m}^{*}(h):=hf'(m)+\dots+h^{k-2}f^{(k-2)}(m)/(k-2)!+b_{k-1}h^{k-1}.
\]
Let $S_{a,m}^{*}(x):=\sum_{h\leq x} \e(azf_{m}^{*}(h))$, so that
$|S_{a,m}^{*}(x)|=|S_{a,m}(x)|$ and we are able to work with $S_{a,m}^{*}(x)$
instead of $S_{a,m}(x)$ in \eqref{eq:Lfbound}. Moreover, let
\[
   \tilde{f}_{m,\alpha}(h):=hf'(m)+\dots+h^{k-2}f^{(k-2)}(m)/(k-2)!+\alpha
   h^{k-1}
\]
and
\[
  \tilde{S}_{a,m}(\alpha,x):=\sum_{h\leq x} \e(az\tilde{f}_{m,\alpha}(h)).
\]
Then we have
\[
   \frac{d}{dx} (f_{m}^{*}(x)-\tilde{f}_{m,\alpha}(x)) \ll
   |b_{k-1}-\alpha|x^{k-2} \ll \Delta_0 x^{k-2},
\]
and we conclude by a partial summation, that
\begin{equation}
\label{eq:Sabsch}
   S_{a,m}^{*}(x) \ll |\tilde{S}_{a,m}(\alpha,x)| +
   az\int_{1}^{x} y^{k-2}\Delta_0 |\tilde{S}_{a,m}(\alpha,y)|\dy.
\end{equation}

Our task is reduced to prove good upper bounds for the term
\[
   \T_{x}:=\sum_{N<m<2N} \sum_{a\leq T}
   |\tilde{S}_{a,m}(\alpha,x)|
\]
with $x\leq H$. For each $m$ in the sum there is a chosen
$\alpha\in\Aa_{m}$.
We intend to apply Theorem \ref{Satz1}. We expect a good
result if we assume
$T$ to be much bigger than $zx^{k-2}$. (Note that $\deg
\tilde{S}_{a,m}(\alpha,x)=k-1$.)

For this purpose, introduce appropriate major and minor arcs.
Let
\[
   \Ma=\bigcup_{q\leq zx^{k-1}} \bigcup_{(u,q)=1}
    \Big[\frac{u}{q}-\frac{1}{qT},\frac{u}{q}+\frac{1}{qT}\Big]
\]
denote the set of major arcs, and $\ma=[0,1]\setminus \Ma$.

Now we distinguish two cases:

Say case ($\ma$) occurs if $m$ is such that 
there exists a real number $\alpha\in\Aa_{m}\cap\ma$.
We choose then such an $\alpha$ for each such $m$.
By Dirichlet's approximation theorem, there exists 
coprime integers $u$ and $q$
with $1\leq q\leq T$ such that
\[
    \Big|\alpha-\frac{u}{q}\Big|\leq \frac{1}{qT}.
\]
Since $\alpha$ is contained in $\ma$, we conclude that even $q\geq
zx^{k-1}$ holds true.

A closer look at the improvement expression in Theorem
\ref{Satz1} yields
\[
  \Big(\frac{zx^{k-2}}{q}+\frac{zx^{k-2}}{T}+\frac{1}{x}
 +\frac{q}{Tx}\Big)^{\rho} \ll x^{-\rho}
\]
with $\rho=1/((k-2)(k-3)+2)$.
Therefore by Theorem \ref{Satz1},
\[
    \sum_{a\leq T} |\tilde{S}_{a,m}(\alpha, x)|\ll Tx^{1-\rho+\eps},
\]
hence, summing up over these $m$,
\begin{equation}
  \label{eq:minor}
\T_{x,\text{($\ma$)}}:= \sideset{}{^{(\ma)}}\sum_{N<m<2N}
\ \sum_{a\leq T} |\tilde{S}_{a,m}(\alpha, x)| \ll NTx^{1-\rho+\eps}. 
\end{equation}

In the major arc case, 
$\Aa_{m}$ is contained completely in a major arc interval.
Then we conclude for $m$ with $N<m<2N$ in case ($\Ma$),
that there exist $q\leq zx^{k-1}$ and
$(u,q)=1$ such that $\|f^{(k-1)}(m)/(k-1)!-u/q\|<1/qT$.
Summing up over these $m$, we obtain
\begin{multline*}
 \T_{x,\text{($\Ma$)}}:=\sideset{}{^{(\Ma)}}\sum_{N<m<2N}
\ \sum_{a\leq T}
  |\tilde{S}_{a,m}(\alpha, x)|\\ \ll  Tx\sum_{q\leq zx^{k-1}}
  \sum_{(u,q)=1} \#\{m\in(N,2N);\ \\
  \hspace{5cm}\|f^{(k-1)}(m)/(k-1)!-u/q\|<1/qT\} \\
\ll Tx \sum_{q\leq zx^{k-1}} \varphi(q) (1+A\lambda N)(1+1/\lambda qT),
\end{multline*}
where in the last step we applied Lemma \ref{lem:smf} with
$g(x):=f^{(k-1)}(x)/(k-1)!-u/q$. 
From now on we make use of the abbreviation $\mu=1+A\lambda N$. 
This yields the bound
\[
  \T_{x,\text{($\Ma$)}} \ll Tx (\mu z^{2}x^{2k-2} + \mu zx^{k-1}\lambda^{-1} T^{-1}),
\]
hence, in the major arc case,
\begin{equation}
  \label{eq:major}
\T_{x,\text{($\Ma$)}} \ll T\mu z^{2}x^{2k-1} + \mu zx^{k}\lambda^{-1}.
\end{equation}
Then joining the estimates \eqref{eq:minor} and \eqref{eq:major} 
together yields
\begin{equation}
\label{eq:sxab}
  \T_{x}\ll \T_{x,\text{($\ma$)}} + \T_{x,\text{($\Ma$)}} \ll
  NTx^{1-\rho+\eps} + T\mu z^{2}x^{2k-1} + \mu zx^{k}\lambda^{-1}.
\end{equation}

Next, from estimate \eqref{eq:Sabsch} together with \eqref{eq:sxab} 
we obtain
\begin{multline*}
  \sum_{N<m<2N} \sum_{a\leq T} |S_{a,m}^{*}(x)| \ll
   \T_{x} + Tz\Delta_{0}\int_{1}^{x}y^{k-2}\T_{y}\dy \\
  \ll NTx^{1-\rho+\eps} +Tz\Delta_{0}\int_{1}^{x} y^{k-2} NTy^{1-\rho+\eps}\dy
  \\
  + T\mu z^{2}x^{2k-1} + Tz\Delta_{0}\int_{1}^{x} y^{k-2} T\mu
z^{2}y^{2k-1} \dy \\
+ \mu zx^{k}\lambda^{-1} +  Tz\Delta_{0}\int_{1}^{x} y^{k-2}
 \mu zy^{k}\lambda^{-1}\dy \\
 \ll NTx^{1-\rho+\eps} + T\mu z^{2}x^{2k-1}+\mu zx^{k}\lambda^{-1},
\end{multline*}
where only in the last step we used $\Delta_0=z^{-1}T^{-1}H^{1-k}$
and $x\leq H$.

Therefore, by \eqref{eq:Lfbound}, we arrive at
\begin{align}
  \Ll_{f} &\ll TH+H^{-1}\sum_{m}\sum_{a\leq T} |S_{a,m}^{*}(H)| \notag\\ &\qquad 
   +zA\lambda H^{k-2}T \int_{1}^{H} \sum_{m} \sum_{a\leq T}
   |S_{a,m}^{*}(x)|\dx \notag\\ &\ll TH+
   H^{-1}NTH^{1-\rho+\eps}+zA\lambda H^{k-2}T\int_{1}^{H}NTx^{1-\rho+\eps}\dx
   \notag\\
&+  H^{-1} T\mu z^{2}H^{2k-1} + zA\lambda H^{k-2}T\int_{1}^{H} T\mu
z^{2}x^{2k-1} \dx \notag \\ &+  H^{-1} \mu zH^{k}\lambda^{-1} 
+ zA\lambda H^{k-2}T\int_{1}^{H} \mu zx^{k-1}\lambda^{-1}\dx \notag\\
 &\ll NTH^{-\rho+\eps} + TH + T\mu z^{2}H^{2k-2} + \mu zH^{k-1}\lambda^{-1},
\label{eq:LF}
\end{align}
where we have chosen $H=[(zA\lambda T)^{-1/k}]$ in \eqref{eq:LF}.
This gives the bound
\begin{multline*}
\Ll_{f}\ll NT(zA\lambda T)^{\rho/k+\eps} + 
T(zA\lambda T)^{-1/k} \\
 +T\mu z^{2}(zA\lambda T)^{2/k-2} 
  + \mu z (zA\lambda T)^{1/k-1}\lambda^{-1}.
\end{multline*}

As necessary constraint for $T$ we get $N^{-k}\leq zA\lambda T \leq
1$, since we need $1\leq H=[(zA\lambda T)^{-1/k}]\leq N$.
\end{proof}

\textbf{Remark.} 
We have to discuss in which range for $T$ Theorem \ref{th:Thb}
provides a nontrivial upper bound for $\Ll_{F}$.

The first two terms of the bound \ref{eq:Thb}
clearly give a nontrivial upper bound $\lll TN$,
and also the third term is $\lll TN$ provided that $T\mu
z^{2}H^{2k-2}\lll TN$ which means
\begin{equation}
\label{eq:t1}
  T\ggg \mu^{k/(2k-2)}z^{1/(k-1)}(A\lambda)^{-1}N^{k/(2-2k)}.
\end{equation}
And also the fourth term is $\lll TN$ provided that
\begin{equation}
\label{eq:t2}
  T\ggg\mu^{k/(2k-1)}z^{1/(2k-1)}A^{(1-k)/(2k-1)} \lambda^{-1} N^{k/(1-2k)}.
\end{equation}
Note that this means $T\ggg \mu (\lambda N)^{-1}zH^{k-1}$, which is
stronger than just $T\ggg zH^{k-1}$ which was expected in the proof to
lead to nontrivial results.

A short calculation shows that
these lower bounds \eqref{eq:t1} and \eqref{eq:t2} 
for $T$ are admissible with the constraint $T\leq
(zA\lambda)^{-1}$ provided that $z^{2}\mu \ll N$ and $z^{2}\mu A\ll
N^{k}$. 
We conclude that then, for small $z$, there exists a range for $T$ where 
a nontrivial bound is achieved.

The lower bounds \eqref{eq:t1} and \eqref{eq:t2}  for $T$ are quite
restrictive, but realize
the advantage of Theorem \ref{Satz1} compared to Theorem \ref{Satz2}.
Using Theorem \ref{Satz2} in the proof instead will lead to the
following slightly weaker bound \eqref{eq:Thb2} since $\tau<\rho$,
but provides a larger range for $T$.

\begin{theorem}
  \label{th:Thb2}
  Let $N$ be a large positive integer and let
$f\in\C^{k}((0,3N))$, $k\geq 3$.
Suppose that there exists real numbers
$\lambda,A$ such that $0<\lambda\leq f^{(k)}(x)\leq A\lambda$ holds for
all $x\in (0,3N)$. Let $\tau=1/(k-1)(k-2)$ and
$\mu=1+A\lambda N$.
Let $z$ be a positive integer that is considered to be small 
and let $T$ be a positive real number with
$N^{-k}(zA\lambda)^{-1}\leq T\leq (zA\lambda)^{-1}$.
Then 
\begin{multline}
\label{eq:Thb2}
  \sum_{a\leq T} \Big|\sum_{N<m<2N} \e(azf(m))\Big| 
\ll  NT(zA\lambda T)^{\tau/k+\eps} +T(zA\lambda T)^{-1/k}  \\
+A\mu T  (zA\lambda T)^{-2/k}.
\end{multline}
\end{theorem}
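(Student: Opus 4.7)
My plan is to mirror the proof of Theorem~\ref{th:Thb}, substituting the standard-approach estimate (Theorem~\ref{Satz2}) in place of the improved moment estimate (Theorem~\ref{Satz1}) at the minor-arc step. I would again Weyl-shift by $1\leq H\leq N$ to obtain
\[
\Ll_{f}=\frac{1}{H}\sum_{a\leq T}\sum_{N<m<2N}\Big|\sum_{h\leq H}\e(azf(m+h))\Big|+O(TH),
\]
then Taylor-expand $f(m+h)=Q_{m}(h)+u_{m}(h)$ with $\deg Q_{m}=k-1$ and leading coefficient $f^{(k-1)}(m)/(k-1)!$, peeling off the $u_{m}$-factor by partial summation. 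Replacing this leading coefficient by a nearby $\alpha\in[0,1]$ within distance $\Delta_{0}=z^{-1}T^{-1}H^{1-k}$ reduces everything (via the analogue of \eqref{eq:Sabsch}) to bounding
\[
\T_{x}:=\sum_{N<m<2N}\sum_{a\leq T}|\tilde{S}_{a,m}(\alpha,x)|
\]
for $1\leq x\leq H$.

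The next step is a major/minor arc decomposition: for each $m$ take the Dirichlet approximation $|\alpha-u/q|\leq 1/qT$ with $(u,q)=1$, $q\leq T$, and call $m$ a minor arc index when $q$ exceeds an appropriately chosen threshold $Q_{0}$. On the minor arcs, Theorem~\ref{Satz2} (applied to the degree-$(k-1)$ polynomial $\tilde{f}_{m,\alpha}$) forces the bracket $z/q+z/x+q/Tx^{k-1}$ to be $\ll x^{-1}$ uniformly for $x\leq H$, so that
\[
\T_{x,(\ma)}\ll NTx^{1-\tau+\eps},\qquad \tau=\tfrac{1}{(k-1)(k-2)}.
\]
On the major arcs I would use the trivial estimate $|\tilde{S}_{a,m}|\ll x$ together with Lemma~\ref{lem:smf} applied to $g(x)=f^{(k-1)}(x)/(k-1)!-u/q$, whose derivative is comparable to $\lambda$ from below and $A\lambda$ from above; this yields a count of order $\mu(1+(qT\lambda)^{-1})$ per pair $(u,q)$ and, after summing $q\leq Q_{0}$ over coprime $u$, gives $\T_{x,(\Ma)}\ll Tx(\mu Q_{0}^{2}+\mu Q_{0}(T\lambda)^{-1})$.

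Feeding these estimates into the analogue of \eqref{eq:Lfbound}, integrating over $x\in[1,H]$ against the factor $zA\lambda H^{k-2}T$, and optimising via $H=[(zA\lambda T)^{-1/k}]$ (so that $zA\lambda TH^{k}=1$, equating boundary and integrated contributions) produces four tentative pieces: $TH$ from the Weyl shift, $NTH^{-\tau+\eps}$ from the minor arcs, and two major-arc pieces proportional to $\mu Q_{0}^{2}$ and $\mu Q_{0}/(T\lambda)$. With $Q_{0}$ chosen of the right size in $A$ and $H$ (essentially $Q_{0}\asymp A^{1/2}H$), the latter of the two major-arc pieces is subordinated to the others in the admissible range $N^{-k}(zA\lambda)^{-1}\leq T\leq(zA\lambda)^{-1}$, leaving the surviving major-arc contribution in the form $A\mu T(zA\lambda T)^{-2/k}$ claimed in \eqref{eq:Thb2}.

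The main obstacle, and really the only non-routine point once the template of Theorem~\ref{th:Thb} is in hand, is verifying that for this choice of $Q_{0}$ the secondary major-arc piece is indeed absorbed under the standing conditions on $T$; the rest is bookkeeping. The conceptual gain of using Theorem~\ref{Satz2} in place of Theorem~\ref{Satz1} is that no constraint of the type $q\ll zx^{k-1}$ is imposed on the rational approximation, which in turn widens the admissible range of $T$ at the price of exchanging the exponent $\rho$ for the slightly smaller $\tau$.
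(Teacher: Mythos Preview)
Your template is right, but the major/minor arc decomposition you propose does not deliver \eqref{eq:Thb2}. With Dirichlet parameter $T$ (so that the major-arc radius is $1/(qT)$) and threshold $Q_{0}\asymp A^{1/2}H$, the secondary major-arc piece, after feeding through the analogue of \eqref{eq:Lfbound}, is $\mu Q_{0}\lambda^{-1}\asymp A^{1/2}\mu H\lambda^{-1}$. For this to be dominated by $A\mu TH^{2}$ one would need $A^{1/2}T\lambda H\gg 1$. But the choice $H=[(zA\lambda T)^{-1/k}]$ forces $A\lambda T\asymp (zH^{k})^{-1}$, hence $A^{1/2}T\lambda H\asymp A^{-1/2}z^{-1}H^{1-k}$, which is $\ll 1$ as soon as $H$ is large. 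Nor is this piece absorbed by $TH$ or $NTH^{-\tau}$ throughout the admissible range $N^{-k}(zA\lambda)^{-1}\leq T\leq(zA\lambda)^{-1}$. So the step you flag as ``the only non-routine point'' in fact fails with your dissection.

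What the paper does is not merely swap Theorem~\ref{Satz1} for Theorem~\ref{Satz2}; it also changes the Farey dissection. The Dirichlet parameter is enlarged to $Tzx^{k-1}$ and the (now $x$-dependent) threshold is $q\leq x$, so the major-arc radius shrinks to $1/(qTzx^{k-1})$. This smaller $\delta$ is what enters Lemma~\ref{lem:smf}, and it yields $\T_{x,(\Ma)}\ll \mu Tx^{3}+\mu(\lambda z)^{-1}x^{3-k}$, hence the major-arc contribution $\mu TH^{2}+\mu(\lambda z)^{-1}H^{2-k}$ after processing. Since $(T\lambda z)^{-1}H^{-k}=A$ by the choice of $H$, both terms collapse into $A\mu TH^{2}=A\mu T(zA\lambda T)^{-2/k}$. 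The enlarged Dirichlet parameter is precisely what Theorem~\ref{Satz2} tolerates (its bracket allows $q$ up to roughly $Tx^{k-1}$, whereas Theorem~\ref{Satz1} does not), and this---rather than a relaxed constraint on $q$ per se---is the mechanism that buys the sharper major-arc bound and the wider range of $T$.
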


\begin{proof}
  We proceed as before in Theorem \ref{th:Thb}, but choose now the
major arc set to be
\[
   \Ma=\bigcup_{q\leq x} \bigcup_{(u,q)=1}
    \Big[\frac{u}{q}-\frac{1}{qTzx^{k-1}},
      \frac{u}{q}+\frac{1}{qTzx^{k-1}}\Big].
\]
In the minor arc case, 
we treat $m$ with $x\leq q\leq zx^{k-1}T$
and we are in the situation
to use Theorem \ref{Satz2} instead, leading to the slightly weaker 
estimate
\[
   \T_{x,\text{($\ma$)}} \ll NTx^{1-\tau+\eps},
\]
since $\tau<\rho$, where $\tau=1/(k-1)(k-2)$.
Like this, we estimate the major arc contribution in a better way, namely
\begin{multline*}
 \T_{x,\text{($\Ma$)}}:=  \sideset{}{^{(\Ma)}}\sum_{N-H<m<2N}
  \ \sum_{a\leq T} |\tilde{S}_{a,m}(\alpha, x)|\\ \ll  Tx\sum_{q\leq x}
  \sum_{(u,q)=1} \#\{m\in(N-H,2N);\ \\
  \hspace{5cm}\|f^{(k-1)}(m)/(k-1)!-u/q\|<1/qTzx^{k-1}\} \\
\ll Tx \sum_{q\leq x} \varphi(q) (1+A\lambda N)(1+1/\lambda qTzx^{k-1}) \\
\ll Tx (\mu x^{2} + \mu x /\lambda Tzx^{k-1})
\ll \mu Tx^{3}+\mu (\lambda z)^{-1}x^{3-k},
\end{multline*}
with $\mu=1+A\lambda N$, again by using Lemma \ref{lem:smf}.
We similarly arrive at
\[
   \Ll_{f}\ll NTH^{-\tau+\eps} + TH
+ \mu TH^{2} + \mu(\lambda z)^{-1}H^{2-k},
\]
if we choose $H=[(zA\lambda T)^{-1/k}]$ again.
Since
\[
   \max\{1, (T\lambda z)^{-1}H^{-k}\} = \max\{1, A\}=A,
\]
the last two terms are $\ll A\mu TH^{2}$.
Again noting that $1\leq H\leq N$ provides the assertion
of Theorem \ref{th:Thb2}.
\end{proof}

\textbf{Remark.} Again, we give the range for $T$ where 
Theorem \ref{th:Thb2} provides a nontrivial bound for $\Ll_{f}$.

We need to inspect the
third term in this bound, it is $\lll TN$ 
provided that $A\mu TH^{2}\lll TN$ which means 
\begin{equation}
\label{eq:tt1}
T\ggg \mu^{k/2}A^{k/2-1}(z\lambda)^{-1}N^{-k/2}.
\end{equation}

A short calculations shows that this lower bound \eqref{eq:tt1}
for $T$ is admissible with the constraint $T\leq (zA\lambda)^{-1}$
provided that $\mu A\ll N$. 

Compared to \eqref{eq:t1} and \eqref{eq:t2}, the range for $T$
due to \eqref{eq:tt1} will be much bigger in most cases.

\medskip
We compare our theorems with the direct application of the 
following recent result of Heath-Brown in \cite[Thm.~1]{HB-V}. 
\begin{theorem}[Heath-Brown]
  \label{th:HB-V}
  Let $k\geq 3$, let $f:[0,N]\to\R$ denote a function in
  $C^{k}((0,N))$, and suppose that $0<\lambda\leq 
  f^{(k)}(x)\leq A\lambda$ for all $x\in(0,N)$. Then
  \[
     \sum_{n\leq N} \e(f(n)) \ll_{A,k,\eps} N^{1+\eps} (\lambda^{1/k(k-1)} +
     N^{-1/k(k-1)} + N^{-2/k(k-1)}\lambda^{-2/k^{2}(k-1)}).
  \]
\end{theorem}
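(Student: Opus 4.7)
The plan is to adapt the circle-method approach of Theorems~\ref{th:Thb} and~\ref{th:Thb2} to the single-sum setting (formally $T=z=1$), exploiting the fact that without an outer averaging over $a$ we gain an extra free parameter which permits a finer optimisation than in the moment setting. First, I would perform a Weyl shift of length $H\leq N$,
\[
  \sum_{n\leq N}\e(f(n)) = \frac{1}{H}\sum_m\sum_{h\leq H}\e(f(m+h)) + O(H),
\]
and Taylor-expand $f(m+h) = Q_m(h) + u_m(h)$ around $m$, where $Q_m(h)$ is a polynomial of degree $k-1$ in $h$ with leading coefficient $\alpha_m:=f^{(k-1)}(m)/(k-1)!$ and remainder $u_m(h)\ll A\lambda h^k/k!$. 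A partial summation in $h$ separates the exponential of $u_m$ from that of $Q_m$ and reduces the task to estimating $\sum_m|\sum_{h\leq x}\e(Q_m(h))|$ for $x\leq H$.

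Next, I would introduce a major-arc threshold $Q_0$ and split the range of $m$ according to whether $\alpha_m$ has a rational approximation $u/q$ with $(u,q)=1$ and $q\leq Q_0$. On minor-arc $m$, Dirichlet's theorem furnishes an approximation with $q\geq Q_0$, and Theorem~\ref{th:weyloriginal} applied to $Q_m$ yields cancellation of order $x^{1-1/(k-1)(k-2)+\eps}$; summing trivially over $m$ contributes $\ll Nx^{1-1/(k-1)(k-2)+\eps}$. On major-arc $m$, Lemma~\ref{lem:smf} applied to $g(x)=\alpha_x-u/q$ (whose derivative is of size $\asymp\lambda$) bounds the number of such $m$ by $\ll(1+A\lambda N)(1+1/(\lambda q Q_0))$; the trivial estimate $x$ for each inner sum, summed over $q\leq Q_0$ and coprime residues, produces the major-arc contribution.

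Collecting all contributions one arrives at a bound of schematic form
\[
   H + NH^{-\rho+\eps} + (\text{major-arc terms in } Q_0,\lambda,N),
\]
which must be minimised by a joint choice of $H$ and $Q_0$. The main obstacle is twofold. First, recovering the sharp exponent $1/k(k-1)$ appearing in all three summands of the stated bound: a single Weyl shift combined with Theorem~\ref{th:weyloriginal} for the polynomial $Q_m$ of degree $k-1$ only delivers an exponent $1/(k-1)(k-2)$, so the argument must either be iterated or combined with a degree-$k$ analysis that exploits the lower bound $\lambda\leq f^{(k)}$ directly rather than via Taylor expansion (whose remainder cannot be controlled without a bound on $f^{(k+1)}$). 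Second, the precise three-way balancing of parameters needed to produce the hybrid term $N^{1-2/k(k-1)}\lambda^{-2/k^2(k-1)}$ out of the Weyl overhead, the minor-arc saving, and the major-arc count. This delicate optimisation, together with the correct extraction of the Weyl exponent, is exactly what Heath-Brown carries out in the referenced paper, going somewhat beyond the direct transfer of the arguments used in the present work.
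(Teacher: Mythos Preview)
The paper does not prove this statement at all: Theorem~\ref{th:HB-V} is quoted verbatim from Heath-Brown's paper \cite{HB-V} and used only for comparison with Theorems~\ref{th:Thb} and~\ref{th:Thb2}. There is therefore no proof in the present paper to compare your attempt against.

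As for your sketch itself, you have correctly identified the two genuine obstacles, and you are right that neither is overcome by a direct transfer of the argument of Theorems~\ref{th:Thb}/\ref{th:Thb2}. In particular, a single Weyl shift followed by Theorem~\ref{th:weyloriginal} applied to the degree-$(k-1)$ Taylor polynomial $Q_m$ yields the exponent $1/(k-1)(k-2)$, not the $1/k(k-1)$ in the statement; and your proposed major-arc count via Lemma~\ref{lem:smf}, with the trivial bound $x$ on each inner sum and summation over $q\leq Q_0$, does not by itself produce the hybrid term $N^{-2/k(k-1)}\lambda^{-2/k^2(k-1)}$. Since you explicitly defer both points to Heath-Brown's own argument, what you have written is an accurate roadmap rather than a proof. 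That is an honest assessment of the situation, but it means the proposal as stated does not establish the theorem.
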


In principle, 
$\Ll_{f}$ can be estimated by using Theorem~\ref{th:HB-V}, 
but one needs then the dependence of the implicit constant 
on $A$ explicitly since the term
$az$ occurs in the argument of the complex exponential
function, so that $A$ in Theorem~\ref{th:HB-V} 
contains this factor $az\leq zT$. 

Writing down the dependence on $A$ from the proof
in \cite[Thm.~1]{HB-V} explicitly, 
we will have a factor $A^{4}$ occurring in the quantity $\Nn$ there.
The resulting bound for $\Ll_{f}$ will then
contain the factor $A^{4/2s}=A^{4/k(k-1)}$. Thus the
main term from this method will provide the extra factor
$T^{4/k(k-1)}$ which is much larger than the factors $T^{\rho/k}$ or
$T^{\tau/k}$ from our Theorems here.

So compared to this, Theorems \ref{th:Thb} and \ref{th:Thb2} give
sharper estimates for long Weyl sum averages. When no or short 
averages are considered, Heath-Brown's bound is sharper.

Note that the potential improvements depend also on the type
of functions considered. For example, if $f(n)=t\log n$ and $k$ is
large with $t=N^{k/2}$, then the minor arc contribution from 
Theorem~\ref{th:Thb} is around $tN^{1-1/k(k-1)(k-2)}$, 
whereas Heath-Brown's bound is around $tN^{1-1/k^{2}}$. It may be that 
Theorem~\ref{th:Thb}, as it stands,
can \emph{not} be improved if uniformity for \emph{all} functions is
desired, but further improvements 
may be feasible if a suitable type of function is assumed.

A careful combination of the methods from
Theorem~\ref{Satz1} 
and Theorem~\ref{th:Thb} together with the idea from the proof of
Theorem~\ref{Satz3} could lead to further improvements, which is 
not discussed in this article.
%

\section{First application: Integer points close to smooth curves}
\label{sec:appl1}

In this application, we will use Theorem \ref{th:Thb2}.
We introduce the following quantity.

\begin{definition}\label{defR}
Let $N$ be a large positive integer, let
$f\in\C^{k}((0,3N))$, $k\geq 3$ and $0<\delta<1/4$.
Define
\[
\Rr(f,N,\delta):=\#\{n\in[N,2N]\cap\Z;\ \|f(n)\|<\delta \}.
\]
\end{definition}
Like this, we count lattice points in $\Z^{2}$ close to the
graph of $f$.
Bordell\`es gives in \cite[Ch.~5]{Bor} an overview of
several known nontrivial bounds for this quantity and their
applications.
In Lemma \ref{lem:smf} we gave already a bound for $\Rr(f,N,\delta)$ 
in the case $k=1$, it is also known as the first derivative test
for $\Rr(f,N,\delta)$. 

In what follows, we use a property of the set which is
counted by $\Rr(f,N,\delta)$. It is proved in \cite[Thm.~5.11]{Bor}.
\begin{lemma}
  \label{lem:space}
Suppose that there exists real numbers
$\lambda, A>0$
such that $\lambda\leq f^{(k)}(x)\leq A\lambda\leq 1/4$ holds for
all $x\in(0,3N)$.
  In the set $\{n\in[N,2N]\cap\Z;\ \|f(n)\|<\delta \}$ there exists a
$H'$-spaced subset $\Rr$ such that $\Rr(f,N,\delta)\leq (k+1)(1+\#\Rr)$,
where $H'=(A\lambda)^{-2/k(k+1)}$.
\end{lemma}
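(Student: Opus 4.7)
The plan is to choose $\Rr$ as a greedy maximal $H'$-spaced subset of $S := \{n \in [N, 2N] \cap \Z : \|f(n)\| < \delta\}$: order $S$ increasingly, set $r_1 := \min S$, and iteratively pick $r_{j+1}$ to be the smallest element of $S$ with $r_{j+1} \geq r_j + H'$. By construction $\Rr = \{r_j\}$ is $H'$-spaced, and by maximality every element of $S$ lies in one of the half-open intervals $[r_j, r_j + H')$. Hence the desired inequality $\Rr(f, N, \delta) \leq (k+1)(1 + \#\Rr)$ reduces to showing that at most $k+1$ elements of $S$ lie in any integer interval of length $H' = (A\lambda)^{-2/(k(k+1))}$.

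To prove this cluster bound, I would argue by contradiction: suppose $n_0 < n_1 < \dots < n_{k+1}$ are $k+2$ elements of $S$ contained in an interval of length $H < H'$. For each $i$ write $f(n_i) = m_i + r_i$ with $m_i \in \Z$ and $|r_i| < \delta$. Applying the mean value theorem for divided differences to $f$ on $n_0, \dots, n_k$ gives
\[
\frac{\lambda}{k!} \leq f[n_0, \dots, n_k] = \frac{f^{(k)}(\xi)}{k!} \leq \frac{A\lambda}{k!}
\]
for some $\xi$ in the convex hull. Splitting $f[n_0, \dots, n_k] = m[n_0, \dots, n_k] + r[n_0, \dots, n_k]$ and using the fact that $|n_i - n_j| \geq |i - j|$ for ordered distinct integers, a direct estimate yields $|r[n_0, \dots, n_k]| \leq 2^k \delta / k!$ via the identity $\sum_{i=0}^{k} 1/(i!(k-i)!) = 2^k/k!$.

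The key observation is that $\prod_{0 \leq i < j \leq k}(n_j - n_i) \cdot m[n_0, \dots, n_k]$ is a $(k+1)\times(k+1)$ integer determinant, hence an integer. If this integer is nonzero, it has absolute value at least $1$, and combined with the Vandermonde bound $\prod_{i < j}(n_j - n_i) \leq H^{k(k+1)/2}$ this gives $|m[n_0, \dots, n_k]| \geq H^{-k(k+1)/2}$. The triangle inequality provides the upper bound $|m[n_0, \dots, n_k]| \leq (A\lambda + 2^k\delta)/k!$, and comparing the two forces
\[
H^{k(k+1)/2} \geq \frac{k!}{A\lambda + 2^k\delta} \gg (A\lambda)^{-1}
\]
in the relevant regime $\delta \ll A\lambda$, that is $H \gg (A\lambda)^{-2/(k(k+1))}$, contradicting $H < H'$ for a suitable choice of implicit constants.

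The main obstacle is the degenerate case $m[n_0, \dots, n_k] = 0$, in which $m_0, \dots, m_k$ lie exactly on a polynomial of degree at most $k-1$. I plan to dispose of this by exploiting the extra point: apply the same dichotomy to $m[n_1, \dots, n_{k+1}]$. If it is nonzero, the preceding size argument applies to the shifted tuple. If both divided differences vanish, then the two interpolating polynomials of degree $\leq k-1$ agree at the $k$ common nodes $n_1, \dots, n_k$ and therefore coincide, so all of $m_0, \dots, m_{k+1}$ lie on a single polynomial $P$ of degree $\leq k-1$. Then $g := f - P$ satisfies $g^{(k)} = f^{(k)}$ and $|g(n_i)| = |r_i| < \delta$, so the mean value formula yields $|g[n_0, \dots, n_k]| \geq \lambda/k!$ while the triangle inequality gives $|g[n_0, \dots, n_k]| \leq 2^k\delta/k!$, again a contradiction in the working regime. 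This closes the case analysis and completes the proof.
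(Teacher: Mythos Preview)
The paper does not supply its own proof of this lemma; it simply cites \cite[Thm.~5.11]{Bor}. Your argument via a greedy $H'$-spaced subset together with the divided-difference\,/\,integer-Vandermonde dichotomy is exactly the standard route and matches the method used in that reference.

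The gap lies in the two appeals to ``the relevant regime $\delta\ll A\lambda$'' and ``the working regime''. Nothing in the stated hypotheses forces $\delta$ to be small relative to $\lambda$, and both branches of your case split genuinely need it: in the nondegenerate case you only obtain $H^{k(k+1)/2}\ge k!/(A\lambda+2^{k}\delta)$, which contradicts $H<H'=(A\lambda)^{-2/k(k+1)}$ precisely when $2^{k}\delta\le(k!-1)A\lambda$; in the degenerate case your conclusion $\lambda\le 2^{k}\delta$ is no contradiction at all once $\delta\ge 2^{-k}\lambda$. In fact the lemma as literally written fails without such a restriction. Take $f(x)=cx^{k}/k!$ with $c>0$ chosen so small that $0<f(n)<\delta$ for every $n\in[N,2N]$; then $\lambda=A\lambda=c\le 1/4$, every integer in $[N,2N]$ belongs to the set, yet $H'=c^{-2/k(k+1)}$ can be made as large as one wishes, so no $H'$-spaced subset $\Rr$ can satisfy $N+1\le(k+1)(1+\#\Rr)$ for $N$ large.

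What your nondegenerate computation actually establishes is the cluster bound with spacing of order $(A\lambda+2^{k}\delta)^{-2/k(k+1)}$, and this $\delta$-dependence is present in Bordell\`es' formulation; the paper's restatement with $H'=(A\lambda)^{-2/k(k+1)}$ has suppressed it, a simplification that is harmless only in the regime $\delta=O_{k}(A\lambda)$. Your handling of the degenerate case by passing to a $(k+2)$-nd point is neat but cannot repair this, since the final inequality $\lambda\le 2^{k}\delta$ it produces is vacuous exactly when the extra hypothesis is missing.
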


A set is called $H'$-spaced 
if any two elements differ by more than $H'>0$.

Theorem \ref{th:Thb2} of Section \ref{sec:mvexposums} 
allows us to prove a strong bound for $\Rr(f,N,\delta)$
which is the following. 

\begin{theorem}
\label{th:thR}
Suppose that there exists real numbers
$\lambda, A>0$
such that $\lambda\leq f^{(k)}(x)\leq A\lambda\leq c_{0}$ holds for
all $x\in(0,3N)$ and some small constant $c_{0}<1/4$,
and assume $(A\lambda)^{-2/k(k+1)}\leq N$.
Let $\lambda_{1}>0$ be such that $|f'(x)|\leq \lambda_{1}$ holds for all
$x\in(0,3N)$.  
Assume that $\delta>0$ is such that
\begin{equation}
\label{eq:condthR}
  A\lambda\ll \delta+ \lambda_{1}\ll (A\lambda)^{1-2(k-1)/k(k+1)},
\end{equation}
then we have the bound
\begin{equation}
\label{eq:Rb}
    \Rr(f,N,\delta)\ll 
1+ A(1+A\lambda N)((\delta+\lambda_{1})/A\lambda)^{1/(k-1)}.
\end{equation}
\end{theorem}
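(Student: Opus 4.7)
The starting point is Lemma~\ref{lem:space}, which produces an $H'$-spaced subset $\Rr \subseteq \{n \in [N,2N] \cap \Z : \|f(n)\|<\delta\}$ with $H' = (A\lambda)^{-2/(k(k+1))}$ and satisfying $\Rr(f,N,\delta) \leq (k+1)(1 + \#\Rr)$. The whole task thus reduces to bounding $\#\Rr$.

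I detect the condition $\|f(n)\|<\delta$ via a Vaaler-type Fourier majorant $\phi$ of $\mathbf{1}_{\|\cdot\|<\delta}$ with frequency cutoff $K$, writing
\[
\#\Rr \leq \sum_{n \in \Rr}\phi(f(n)) = c_{0}\#\Rr + \sum_{0<|a|\leq K} c_{a}\, T_{a}, \qquad T_{a} := \sum_{n \in \Rr} \e(af(n)),
\]
with $c_{0} \ll \delta + K^{-1}$ and $|c_{a}| \ll \min(c_{0},1/|a|)$. Absorbing the $c_{0}\#\Rr$ term to the left, the problem reduces to bounding the sums $T_{a}$. At this point I perform a Weyl shift of length $H \leq H'/2$ \emph{that exploits the $H'$-spacing of $\Rr$}:
\[
H T_{a} = \sum_{h=1}^{H}\sum_{n \in \Rr} \e(af(n+h)) + O(|a|\lambda_{1}H^{2}\#\Rr).
\]
Thanks to the spacing, the integer translates $\{n+h : n \in \Rr,\ 1 \leq h \leq H\}$ are pairwise distinct, so the cluster structure around each $n \in \Rr$ is respected; this is exactly the step where the extra saving of $H^{-1}$ appears.

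For each fixed $n \in \Rr$, Taylor's theorem rewrites $\sum_{h=1}^{H}\e(af(n+h))$ as a short Weyl sum of degree $k-1$ in $h$, with leading coefficient $af^{(k-1)}(n)/(k-1)!$, up to a phase error of size $O(|a|H^{k}A\lambda)$. I then split the sum over $n \in \Rr$ into major and minor arcs with respect to the Diophantine approximability of $f^{(k-1)}(n)/(k-1)!$: the minor-arc contribution is controlled by Theorem~\ref{Satz2}, while the major-arc contribution is counted by Lemma~\ref{lem:smf} applied to $f^{(k-1)}$ (whose derivative $f^{(k)}$ lies in $[\lambda,A\lambda]$), producing an estimate of the shape $(1+A\lambda N)(1+\delta^{*}/(\lambda H^{k-1}))$ with $\delta^{*} := \delta+\lambda_{1}$. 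Summing dyadically against the weights $|c_{a}|$ and choosing $H \asymp H'$ and $K \asymp 1/\delta^{*}$ optimally yields the announced bound.

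\emph{Main obstacle.} The delicate point is to verify that the exponent $1/(k-1)$ emerges from the major-arc count once the $H^{-1}$ savings of the Weyl-shift step and the Fourier-detection weight $K^{-1}$ have been combined, and to check that condition~\eqref{eq:condthR} places us exactly in the regime where the major-arc contribution dominates, the minor-arc and Taylor-error terms are absorbed into it, and both $H$ and $K$ stay within the admissibility ranges of Theorem~\ref{Satz2} and Lemma~\ref{lem:smf}.
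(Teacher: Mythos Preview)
Your architecture matches the paper's: detect $\|f(n)\|<\delta$ harmonically, pass to the $H'$-spaced subset $\Rr$ from Lemma~\ref{lem:space}, Weyl-shift by $h\leq H$ while using the spacing to keep the translates $n+h$ disjoint, then Taylor-expand to a degree-$(k-1)$ Weyl sum in $h$ and run a major/minor arc argument with Theorem~\ref{Satz2} and Lemma~\ref{lem:smf}. Two concrete points, however, prevent the sketch from closing.

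\textbf{(i) The Weyl-shift error is not absorbable.} Your identity
\[
   HT_{a}=\sum_{h=1}^{H}\sum_{n\in\Rr}\e(af(n+h))+O(|a|\lambda_{1}H^{2}\#\Rr)
\]
comes from the crude approximation $\e(af(n))=\e(af(n+h))+O(|a|\lambda_{1}h)$. After dividing by $H$ and summing against $|c_{a}|$ with $\sum_{|a|\leq K}|c_{a}||a|\asymp K$, this error contributes $\asymp K\lambda_{1}H\,\#\Rr$ to the right-hand side. With your choices $K\asymp 1/(\delta+\lambda_{1})$ and $H\asymp H'=(A\lambda)^{-2/k(k+1)}$ this is $\asymp (\lambda_{1}/(\delta+\lambda_{1}))H'\,\#\Rr$, and whenever $\lambda_{1}\geq\delta$ this is $\gg H'\,\#\Rr\gg \#\Rr$, so it cannot be moved to the left. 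The paper avoids this error entirely by a positivity argument rather than an approximation: it uses the Fej\'er weight $a_{m}=\sum_{0<|a|<T}(T-|a|)\e(af(m))$, notes that $m\in\Rr$ forces $\|f(m-h)\|<\lambda_{1}H+\delta$ for $h\leq H$, and hence $a_{m-h}\gg T^{2}\geq a_{m}$ \emph{pointwise} once $T\leq 1/(8(\lambda_{1}H+\delta))$. Thus $a_{m}\ll H^{-1}\sum_{h\leq H}a_{m-h}$ with no $\lambda_{1}$-error at all; the $\lambda_{1}$ enters only through the constraint linking $T$ and $H$.

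\textbf{(ii) The parameter $H$ is not $H'$ but much smaller.} The exponent $1/(k-1)$ in \eqref{eq:Rb} does not come from setting $H\asymp H'$. In the paper the choice $T\asymp (H^{k}A\lambda)^{-1}$ (which balances the Taylor remainder in the $k$-th derivative test) together with the constraint $T\leq 1/(8(\lambda_{1}H+\delta))$ forces $H^{k}A\lambda\gg \lambda_{1}H+\delta$, and the paper takes the smallest admissible value
\[
   H\asymp\Big(\frac{\delta+\lambda_{1}}{A\lambda}\Big)^{1/(k-1)}.
\]
Condition~\eqref{eq:condthR} is precisely what guarantees that this $H$ satisfies $1\ll H\ll H'$, so that the spacing can be exploited. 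The minor-arc contribution then becomes $\Rr(f,N,\delta)H^{-\tau+\eps}$ (self-referential, with the factor $\#\Rr$ rather than $N$), and is removed by Lemma~\ref{lem:sri}; what survives is the major-arc term $A(1+A\lambda N)H$, which with the above $H$ is exactly the bound~\eqref{eq:Rb}. Your choice $H\asymp H'$ would instead give $A\mu(A\lambda)^{-2/k(k+1)}$, i.e.\ the Huxley--Sargos smoothness term, not the sharper bound claimed.
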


\begin{proof}
We begin the proof as indicated in \cite[Ex.\ 6.7.4]{Bor}.
Let $m\in\Z$ with $N\leq m\leq 2N$ and $\|f(m)\|<\delta$,
then
\begin{multline}
\label{eq:Tqu}
 \Big|\sum_{a=0}^{T-1}\e(af(m))\Big|^{2}=\sum_{a_{1},a_{2}}\e((a_{1}-a_{2})f(m))
 \\ = \sum_{a_{1},a_{2}}\Re(\e((a_{1}-a_{2})f(m))) \geq T^{2}/2,
\end{multline}
since we have $\Re(\e(af(m)))\geq \sqrt{2}/2$ for all $a\in\Z$ with $|a|<T$,
provided that $1\leq T\leq [1/8\delta]+1$.

From Lemma \ref{lem:space} we know that there is a $H'$-spaced subset $\Rr$ of 
$\{m;\ N\leq m\leq 2N, \|f(m)\|<\delta\}$ with $\Rr(f,N,\delta)\leq
(k+1)(1+\#\Rr)$, where $H'= (A\lambda)^{-2/k(k+1)}$.

Hence 
\[
    \Rr(f,N,\delta)\ll 1+\sum_{m\in \Rr} 1\leq 1+
\frac{2}{T^{2}} \sum_{m\in\Rr} \Big|\sum_{a=0}^{T-1}\e(af(m))\Big|^{2}.
\]
Now opening the square and separating the summand for $a=0$ shows
\begin{multline}
\label{eq:amdef}
\Big|\sum_{a=0}^{T-1}\e(af(m))\Big|^{2} = \sum_{|a|\leq T-1}
(T-|a|)\e(af(m))
\\ = T + \sum_{0<|a|\leq T-1}(T-|a|)\e(af(m))=:T+a_{m} 
\end{multline}
say. Clearly $a_{m}>0$ for large $T$ and for $m\in\Rr$ due to \eqref{eq:Tqu}.
We proceed with
\begin{equation}
\label{eq:Rexposum}
   \Rr(f,N,\delta)\ll 1+ \Rr(f,N,\delta)T^{-1} +
   T^{-2}\sum_{m\in\Rr}a_{m} \ll 1+T^{-2}\sum_{m\in\Rr}a_{m},
\end{equation}
by an application of Lemma \ref{lem:sri} if we take $T\gg 1$ (in fact
$T\geq 4(k+1)$ suffices).

Let $m\in\Rr$,
then there exists an integer $n$ with $|f(m)-n|<\delta$. Thus,
by an application of the mean value theorem,
$|f(m-h)-f(m)|=|f'(t)|h\leq \lambda_{1}H$ 
for some $t\in (-h,0)$.
We conclude $|f(m-h)-n|\leq|f(m-h)-f(m)|+|f(m)-n|\leq\lambda_{1}H+\delta$, so
$\|f(m-h)\|\leq \lambda_{1}H+\delta$ for all $h\leq H$.

This argument shows that for $m\in\Rr$ 
we have $a_{m-h}\geq T^{2}/2-T\gg T^{2}$ by \eqref{eq:Tqu}
assuming 
\begin{equation}
\label{eq:Tbound}
T\leq \frac{1}{8(\lambda_{1}H+\delta)}.
\end{equation}
Since $T^{2}\geq a_{m}$ we conclude that
\[
a_{m}\ll \frac{1}{H}\sum_{h\leq H}a_{m-h}.
\]
This implies
\[ 
\sum_{m\in\Rr}a_{m}\ll \sum_{m\in\Rr}\frac{1}{H}\sum_{h\leq
  H}a_{m-h}. 
\]

Now we have
\[\sum_{m\in\Rr}\frac{1}{H}\sum_{h\leq
  H}a_{m-h}  
= \frac{1}{H}\sum_{\substack{n\in\Z\\ \exists h\leq H:
   n+h\in\Rr}} a_{n}, 
\]
since for each $m\in\Rr$, the summands $a_{m-1},\dots,a_{m-h}$ occur
in the sum on the right hand side. Assuming $H\leq H'$, the elements
of $\Rr$ are $H$-spaced, so each such summand occurs exactly once on
both sides of this equality.

This improves upon the Weyl step in the proofs above, namely
\begin{multline*}
  \sum_{m\in\Rr} a_{m} \ll 
    H^{-1} \sum_{\substack{n\in\Z\\\exists h'\leq H: n+h'\in\Rr}}
a_{n} 
  = H^{-2}\sum_{h\leq H} \sum_{\substack{n\in\Z\\\exists h'\leq H:
      n+h+h'\in\Rr}} a_{n+h} \\
  = H^{-2} \sum_{\substack{n\in\Z\\\exists h_{0}\in[1,2H]:
      n+h_{0}\in\Rr}}\ \sum_{\substack{h\leq H \\
      h\in[h_{0}-H,h_{0}-1] }} a_{n+h}\\
  = H^{-2} \sum_{\substack{n\in\Z\cap[N-2H,2N]\\\exists h_{0}\in[1,2H]:
      n+h_{0}\in\Rr}}\ \sum_{\substack{h\leq H \\
      h\in[h_{0}-H,h_{0}-1] }} a_{n+h}. 
\end{multline*}

Note that the number of $n\in\Z$ in this sum is $\ll
\min\{H\#\Rr,N\}=:R$, which gives an improvement for a sparse set
$\Rr$ since then $H\#\Rr\ll N$.
Further, $h$ lies in the intersection $\Hh:=[1,H]\cap[h_{0}-H, h_{0}-1]$
 which is an interval
of length at most $H$. The extra factor
$H^{-1}$ due to the improved Weyl step will be an advantage in the
minor arc analysis.

With above definition \eqref{eq:amdef} of $a_{m}$,
we derive
\begin{equation}
\label{eq:eqsuma}
\sum_{m\in\Rr} a_{m}\ll TH^{-2} \sum_{a\leq T} \sum_{\substack{m\in[N-2H,2N]\\
    \exists h_{0}\leq 2H:m+h_{0}\in\Rr}}
\Big|\sum_{h\in\Hh} \e(af(m+h))\Big|. 
\end{equation}

The right hand side in \eqref{eq:eqsuma} can now
be handled like $\Ll_{f}$ in the proofs of the Theorems 
\ref{th:Thb} and \ref{th:Thb2} above.
For this, let us denote in analogy to $\Ll_{f}$, 
\[
\Ll_{f}':= H^{-1} \sum_{a\leq T}  \sum_{\substack{m\in[N-2H,2N]\\
    \exists h_{0}\leq 2H:m+h_{0}\in\Rr}} \Big|\sum_{h\in\Hh} \e(af(m+h))\Big|.
\]
Thus
\begin{equation}
\label{eq:rr}
\Rr(f,N,\delta)\ll 1+ T^{-1}H^{-1}\Ll_{f}'
\end{equation}
by \eqref{eq:Rexposum}
and \eqref{eq:eqsuma}. Note that $\Ll_{f}$ differs from $\Ll_{f}'$
only by the summation over $m$ and that $h$ runs through an interval
$\Hh$ of length 
at most $H$, which boundary points depend on $m$.

The estimation of $\Ll_{f}'$ follows now that of $\Ll_{f}$.
The only small change in above proof of Theorem
\ref{th:Thb2} lies in the minor arc estimate 
\eqref{eq:minor}, where we are able to replace $N$ by $R$ so that
\[
   \T'_{x,(\ma)} \ll RTx^{1-\tau+\eps}.
\]
To verify this, note that 
\[
   \T_{x}':=\sum_{\substack{N-2H<m<2N\\ \exists h_{0}\leq 2H:m+h_{0}\in\Rr}} \sum_{a\leq T}
   |\tilde{S}_{a,m}'(\alpha,x)|
\]
involves a restriction of $h$ in the sum 
\[
  \tilde{S}_{a,m}'(\alpha,x)=\sum_{h\in\Hh,h\leq x} \e(a\tilde{f}_{m,\alpha}(h))
\]
to $h\in\Hh$.
Still Theorem \ref{Satz2} can be applied since the condition $h\in \Hh$ 
restricts the summation down to a set which is an interval.
Since this interval has an upper bound of at most $x$, 
this provides the stated bound for $\T'_{x,(\ma)}$.

Next, for appropriate $H$, choose $T=[\frac{1}{H^{k}A\lambda}]$. 
Especially, if $H^{k}A\lambda -8\lambda_{1}H\geq 8\delta$ 
then \eqref{eq:Tbound} is true.
We obtain in the same way as in
Theorem \ref{th:Thb2} that
\[
   \Ll_{f}'\ll RTH^{-\tau+\eps} + A\mu TH^{2}.
\]

We put this bound inside \eqref{eq:rr} above and obtain
\begin{multline*}
  \Rr(f,N,\delta) \ll 1+ T^{-1}H^{-1}(RTH^{-\tau+\eps} + A\mu TH^{2}) 
\\ \ll 
  1+\Rr(f,N,\delta) H^{-\tau+\eps} + A\mu H.
\end{multline*}

By an application of Lemma \ref{lem:sri} we leave out the term on
the right hand side containing $\Rr(f,N,\delta)$ assuming that $H$ is
large enough in terms of the implicit constant.
This works since $H^{-\tau+\eps}$ gets
arbitrary small if $H$ increases.

We arrive at the bound 
\[
\Rr(f,N,\delta)\ll 1+ A(1+A\lambda N)H. 
\]

Now we collect all the assumptions made on $H$.
Due to Lemma \ref{lem:space} we need $H\leq (A\lambda)^{-2/k(k+1)}
\leq N$.

Moreover, due to \eqref{eq:Tbound} we need
$H^{k}A\lambda-8\lambda_{1}H\geq 8\delta$,
that is if
\begin{equation}
\label{eq:deltab}
  8\delta \leq H(H^{k-1}A\lambda-8\lambda_{1}),
\end{equation}
for which necessarily $H^{k-1}A\lambda>8\lambda_{1}$ has to be true.
Let
$H=((8\delta+8\lambda_{1})/A\lambda)^{1/(k-1)}$,
so that \eqref{eq:deltab} holds true.
Now if
\[
   ((\delta+\lambda_{1})/A\lambda)^{1/(k-1)} \ll (A\lambda)^{-2/k(k+1)},
\]
that is, if
\begin{equation}
\label{eq:cond}
  \delta+ \lambda_{1}\ll (A\lambda)^{1-2(k-1)/k(k+1)}
\end{equation}
holds true,
we conclude that  $H$ is appropriate
for the asserted bound in the theorem to be valid
for all small $\delta$ such that \eqref{eq:cond} holds.
We further need to assume that $H$ is bigger
than some constant which makes above step with Lemma \ref{lem:sri}
work, so we shall assume also $A\lambda \ll \delta+\lambda_{1}$.

This yields the assertion.
\end{proof}

\medskip
We shall compare the bound in Theorem \ref{th:thR} 
with the well-known theorem of Huxley and Sargos from 
\cite[Thm.~1]{HuxSar}, it states the following bound for $\Rr(f,N,\delta)$.
The given version here is explicit in $A$ and has been taken from
\cite[Thm.~5.12]{Bor} where a proof is provided. The known proofs
are geometric and do not depend on any exponential sum technique.

\begin{theorem}[Huxley and Sargos, explicit in $A$]
\label{th:HS}
Let $k\geq 3$ be an integer and $f\in C^{k}([N,2N])$ be such that there
exist $\lambda,A>0$ with $\lambda \leq |f^{(k)}(x)|\leq A\lambda$
for all $x\in[N,2N]$. Let $0<\delta<1/4$. Then
\begin{equation*}
   \Rr(f,N,\delta)\ll 
     N(A\lambda)^{2/k(k+1)} +
     N(A\delta)^{2/k(k-1)} + (\delta/\lambda)^{1/k}+1. 
\end{equation*}
\end{theorem}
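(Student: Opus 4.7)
\textbf{Proof plan for Theorem \ref{th:HS}.}

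The plan is to prove this by the classical divided-difference/geometric approach, avoiding exponential sums entirely. The key idea is that the lower bound $|f^{(k)}| \geq \lambda$ prevents integer points close to the curve from clustering too tightly, because any $k+1$ close points would algebraically constrain the value of $f^{(k)}(\xi)$ via an interpolation identity.

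First I would apply Lemma \ref{lem:space} to pass to an $H'$-spaced subset $\Rr_0$ of the counted set, with $H'=(A\lambda)^{-2/k(k+1)}$, losing only a factor $k+1$. Counting $H'$-spaced integers in $[N,2N]$ trivially yields $\#\Rr_0 \ll N/H' + 1 = N(A\lambda)^{2/k(k+1)} + 1$, which already contributes the first term of the bound. To squeeze out the sharper middle term $N(A\delta)^{2/k(k-1)}$, I would subdivide $[N,2N]$ into shorter intervals of length $L$ (to be optimized) and within each block consider $k$-tuples of close points $n_0<\dots<n_{k-1}$. Writing $f(n_i) = m_i + \eps_i$ with $m_i \in \Z$ and $|\eps_i| < \delta$, the divided difference identity
\[
 f[n_0,\dots,n_{k-1}] = \frac{f^{(k-1)}(\xi)}{(k-1)!}
\]
forces $f[n_0,\dots,n_{k-1}]$ to lie within $O(\delta L^{-(k-1)})$ of the rational $m[n_0,\dots,n_{k-1}]$, whose denominator divides $\prod_{i<j}(n_j-n_i)$. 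Because $|f^{(k)}|\geq \lambda$ ensures $f^{(k-1)}$ varies by at least $\lambda L$ across the block, distinct blocks give distinct rational divided differences, bounding the number of blocks containing $\geq k$ close points. Optimizing $L$ in the resulting inequality produces the exponent $2/k(k-1)$ and the factor $A\delta$ inside.

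For the local term $(\delta/\lambda)^{1/k}$, I would argue that inside a single short cluster where $f$ takes values near a common integer, the difference $|f(n+h)-f(n)|<2\delta$ combined with the $k$-th derivative bound (iterated through the mean value theorem, or Taylor expansion localized on an interval where higher derivatives are controlled) forces $h\ll(\delta/\lambda)^{1/k}$; this bounds the size of any single cluster, and in the trivial regime where the previous two terms collapse to $O(1)$, this local count dominates.

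The main obstacle I anticipate is keeping the dependence on $A$ clean throughout. Because $A$ measures the multiplicative gap between $\lambda$ and the true size of $|f^{(k)}|$, each transition from an estimate involving $f^{(k)}(\xi)$ to one involving $\lambda$ alone loses a factor of $A$, and these losses must be distributed exactly right to reproduce $(A\lambda)^{2/k(k+1)}$ and $(A\delta)^{2/k(k-1)}$ in the stated form. The combinatorial accounting --- especially aligning the clustered count from the divided-difference step with the spacing constraint from Lemma \ref{lem:space}, and optimizing $L$ against both --- is where the argument requires the most care.
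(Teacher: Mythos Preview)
The paper does not actually prove Theorem~\ref{th:HS}. It is quoted as a known result, attributed to Huxley--Sargos \cite{HuxSar} with the $A$-explicit version taken from \cite[Thm.~5.12]{Bor}, and the paper explicitly notes that ``the known proofs are geometric and do not depend on any exponential sum technique.'' The theorem appears here only as a benchmark against which the paper's own Theorem~\ref{th:thR} is compared. So there is no in-paper proof to match your proposal against.

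That said, your plan is in line with how the result is proved in the cited references: the divided-difference mechanism combined with a spacing lemma is exactly the Huxley--Sargos strategy, and your appeal to Lemma~\ref{lem:space} for the smoothness term $N(A\lambda)^{2/k(k+1)}$ mirrors the role of \cite[Thm.~5.11]{Bor} as a preliminary to \cite[Thm.~5.12]{Bor}. Your handling of the $(k-1)$-th divided difference, using the lower bound on $|f^{(k)}|$ to force variation of $f^{(k-1)}$ across blocks, is also the right idea in outline. Where your sketch is thinnest is the step ``distinct blocks give distinct rational divided differences, bounding the number of blocks containing $\geq k$ close points'': in the actual argument one does not merely count blocks but runs a more careful major-arc/minor-arc dichotomy on the divided differences themselves, and the optimization producing the exponent $2/k(k-1)$ together with the factor $A$ in $(A\delta)^{2/k(k-1)}$ requires tracking both the size of the denominator $\prod_{i<j}(n_j-n_i)$ and the range of $f^{(k-1)}$ simultaneously. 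This is precisely the ``combinatorial accounting'' you flag as the delicate part, and it is genuinely so; but as a plan your outline is correct and follows the references the paper cites.
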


In Theorem \ref{th:HS}, the first term
$N(A\lambda)^{2/k(k+1)}$ dominates if
$\delta\leq\lambda^{1-2/(k+1)}$
 $A^{-2/(k+1)}$
and $\delta\ll N^{k}A^{2/(k+1)}\lambda^{1+2/(k+1)}$,
especially when examining very small $\delta$.
The main term $N(A\lambda)^{2/k(k+1)}$ 
is commonly called the smoothness term 
and being regarded as very difficult to improve,
compare also \cite[p.275]{Bor}.

A comparison of the term $NA^{2}\lambda((\delta+\lambda_{1})/A\lambda)^{1/(k-1)}$
in the bound of Theorem \ref{th:thR} with the smoothness term
shows that under the stated assumptions,
Theorem \ref{th:thR} gives a sharper bound.
Also note that if we let $\delta\to 0$ 
in Theorem \ref{th:thR}, we obtain the improved bound
$\Rr(f,N,0)\ll 1+ A(1+A\lambda N)(\lambda_{1}/A\lambda)^{1/(k-1)}$,
compared to Theorem \ref{th:thR} which yields
the smoothness term as upper bound for $\Rr(f,N,0)$.

The bound of Theorem \ref{th:thR} is quite satisfying,
the only obstruction lies in the restrictive assumption
\begin{equation}
\label{eq:obstr}
\lambda_{1}\ll (A\lambda)^{1-2(k-1)/k(k+1)}
\end{equation}
needed for $\lambda_{1}$ and $A\lambda$.
This shows that Theorem \ref{th:thR} 
is in fact of limited use in applications since this condition
is true only for certain appropriate functions. Unfortunately,
functions like $f(x)=B/x^{r}$ or $f(x)=(B/x)^{1/r}$ for
integers $r\geq 2$, which are occurring in interesting applications,
are not of this kind when $A$ is taken as constant.
And if $A$ is so big such that \eqref{eq:obstr} holds, the bound of Theorem
\ref{th:thR} can be quite weak.

In this context, we state the following theorem of Gorny \cite{Gor}.
\begin{theorem}[Gorny]
  \label{th:gor}
    Let $k\geq 2$, $f\in C^{k}([1,1+3N])$. Let $M,A,\lambda\in\R$ such that
  $|f(x)|\leq M$ and $|f^{(k)}(x)|\leq A\lambda$ holds for all $x\in [1,1+3N]$.
  Then for all $x\in [1,1+3N]$,
\[
    |f'(x)| \ll MN^{-1} + M^{1-1/k}(A\lambda)^{1/k},
\]  
with implicit constant that depends on $k$ only.
\end{theorem}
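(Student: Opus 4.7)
The plan is to prove this Landau--Kolmogorov style inequality by expanding $f$ via Taylor's theorem at several equally spaced points near a fixed $x_{0}$, and then recovering $f'(x_{0})$ by inverting a Vandermonde linear system whose condition number depends only on $k$.

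Concretely, fix $x_{0}\in[1,1+3N]$. Since the interval has length $3N$, at least one of the two half-intervals $[x_{0},1+3N]$ or $[1,x_{0}]$ has length $\geq 3N/2$; after reflecting $f(x)\mapsto f(2x_{0}-x)$ if necessary (which does not affect the hypotheses or the modulus of $f'(x_{0})$), I may assume the former. Choose a parameter $h>0$ with $(k-1)h\leq 3N/2$, to be optimized, and put $y_{j}:=x_{0}+jh$ for $j=0,1,\dots,k-1$, so all $y_{j}$ lie in $[1,1+3N]$. Taylor's theorem with integral remainder gives
\[
f(y_{j})=\sum_{i=0}^{k-1}\frac{(jh)^{i}}{i!}f^{(i)}(x_{0})+r_{j},\qquad |r_{j}|\leq \frac{(jh)^{k}}{k!}A\lambda\ll h^{k}A\lambda.
\]

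Introducing the rescaled unknowns $u_{i}:=h^{i}f^{(i)}(x_{0})/i!$ turns this into the square linear system $\sum_{i=0}^{k-1}j^{i}u_{i}=f(y_{j})-r_{j}$ for $j=0,\dots,k-1$, whose coefficient matrix is the standard Vandermonde matrix at the nodes $0,1,\dots,k-1$. Its inverse entries are absolute constants depending only on $k$. Solving for $u_{1}=hf'(x_{0})$ and bounding $|f(y_{j})|\leq M$, $|r_{j}|\ll h^{k}A\lambda$, I obtain the key estimate
\[
|f'(x_{0})|\ll \frac{M}{h}+h^{k-1}A\lambda.
\]

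To finish, I optimize $h\in(0,cN]$ with $c=3/(2(k-1))$. The unconstrained minimum occurs at $h_{*}=(M/(A\lambda))^{1/k}$ and yields the value $M^{1-1/k}(A\lambda)^{1/k}$. If $h_{*}\leq cN$ this choice is admissible and gives the second term of the asserted bound. If $h_{*}>cN$ then $M>(cN)^{k}A\lambda$, so $(cN)^{k-1}A\lambda<M/(cN)$; taking $h=cN$ then makes both summands $\ll M/N$, producing the first term. Combining the two cases yields the stated bound $|f'(x_{0})|\ll MN^{-1}+M^{1-1/k}(A\lambda)^{1/k}$.

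The only real obstacle is justifying that the Vandermonde inversion is quantitative with $k$-dependent constants: this is standard (the entries of the inverse are given by elementary symmetric polynomials in the nodes, here just $0,1,\dots,k-1$), but it is what makes the $h$-dependence $M/h+h^{k-1}A\lambda$ clean and decoupled. Everything else --- picking a direction so the nodes stay in the interval, and the one-parameter optimization in $h$ --- is routine.
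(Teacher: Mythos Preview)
Your argument is correct. The paper itself does not prove this statement: it is quoted as Gorny's theorem with a reference to \cite{Gor}, and then applied to deduce a sufficient criterion for condition~\eqref{eq:obstr}. So there is no ``paper's own proof'' to compare against.

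That said, what you wrote is a clean, self-contained proof of the Landau--Kolmogorov type bound on a finite interval, and it is worth noting that it is somewhat more elementary than Gorny's original treatment. Your key step --- evaluating $f$ at $k$ equally spaced nodes, writing the resulting Taylor expansions as a Vandermonde system in the scaled derivatives $h^{i}f^{(i)}(x_{0})/i!$, and inverting --- yields the pointwise inequality $|f'(x_{0})|\ll_{k} M/h + h^{k-1}A\lambda$ with constants depending only on $k$; the optimization in $h\in(0,cN]$ then gives the two terms $MN^{-1}$ and $M^{1-1/k}(A\lambda)^{1/k}$ exactly as stated. The reflection trick to ensure the nodes stay in the interval is fine; equivalently one could simply take nodes $x_{0}-jh$ on the left. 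The edge case $A\lambda=0$ (so $f$ is a polynomial of degree at most $k-1$) is implicitly covered by your constrained choice $h=cN$, which then reduces to a Markov-type bound $|f'(x_{0})|\ll_{k} M/N$.
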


By this theorem, we conclude that \eqref{eq:obstr} is true for all
sufficiently large $N$ provided that $M\leq (A\lambda)^{1-2/(k+1)}$.
Therefore, functions on $[1,1+3N]$ that are much smaller in absolute value 
compared to the maximum of the absolute value of the
$k$-th derivative are admissible for Theorem \ref{th:thR}.
This gives a nice criterion for Theorem \ref{th:thR} to hold, but it
seems to be hard to find easy examples.

\textbf{Remark.}
Applying Theorem \ref{th:Thb} instead of Theorem \ref{th:Thb2} in
the proof of Theorem \ref{th:thR} would 
also lead to a vanishing of the minor arc contribution in the bound,
but the resulting bound for $\Rr(f,N,\delta)$ 
would be weaker due to the bigger major arc contribution.
Instead, the presented vanishing trick may be used with even
bigger minor arc contributions probably leading to further refinements.

\section{Second application:
The polynomial large sieve inequality 
(LSI) in  the one-dimensional case}
\label{sec:polyLSI}
In this section we present an application of Theorem \ref{Satz1},
namely to the polynomial large sieve inequality. This generalization of the
classical large sieve inequality to sparse moduli sets, usually given as values
of some fixed polynomial, has been studied intensely in past research and has
already influenced some other areas of Number Theory, especially the
version with $k=2$ from \cite{BaiZhao}.
To name a few such topics, it has been found useful to find
variants of Bombieri--Vinogradov's theorem \cite{Bak,KHBVvar},
new results on 
primes of polynomial shape \cite{FooZhao} or primes in APs to spaced
moduli \cite{Bak},
divisibility questions with Fermat quotients \cite{BFKS}, 
and mean value estimates for character sums with
applications \cite{BaiYou,BaiZhao2,BloGoLa, LamYou}. Furthermore, the
multidimensional polynomial LSI can be used 
for sieving with high powers like seen in \cite{KH2} and reaches questions
related to the abc-conjecture.

We start by giving the setting and basic assumptions in the polynomial
LSI in the one-dimensional case.

\textbf{Setting.} Let $P\in\R[x]$ be a fixed monic polynomial of
degree $k\geq 2$ with $P(0)=0$.
Assume that $P$ has only positive values in $[Q,2Q]$ for each real $Q\geq 1$
and let $M_{Q}:=\max \{P(q);\ q\in [Q,2Q]\}$ be the maximal value for
integers $q\in[Q,2Q]$. Clearly $M_{Q}\ll Q^{k}$,
assume also that $P(q)\gg Q^{k}$ holds true for all
integers $q\in[Q,2Q]$ and some implicit constant that may depend only on $k$.
Let $N,M$ be integers and $(v_{n})_{n\in\N}$ be a sequence of complex
numbers. 

In the theory of the large polynomial LSI, see
\cite{BaiZhao,KH,Zhao}, 
we aim to give upper bounds for the quantity 
\[
  \Sigma_{P}:=\sum_{q\leq Q} \sum_{\substack{1\leq a\leq
      P(q)\\(a,P(q))=1}} \Big| \sum_{M<n\leq M+N} 
    v_{n}\e\Big(\frac{an}{P(q)}\Big) \Big|^{2}.
\]
When we put the current form for Weyl's inequality,
Theorem~\ref{th:weyloriginal}, in the machinery of  \cite{KH,KH2},
the bound $\Sigma_{P}\ll Q^{\eps} \|v\|^{2}
(Q^{k+1}+A_{k}(Q,N))$ is easily derived with
\[
   A_{k}(Q,N):= NQ^{1-1/k(k-1)}+N^{1-1/k(k-1)}Q^{1+1/(k-1)}.
\] 

The interesting range for $N$ is $Q^{k}\ll N\ll Q^{2k}$, since outside
that, it is already known by an application of the standard large
sieve inequality that the sharp bound 
$\Sigma_{P}\ll Q^{\eps} \|v\|^{2} (Q^{k+1}+N)$ holds true. So we
assume without loss of generality that $N$ lies in this range.

This result already offers an improvement compared to \cite{BaiZhao2}
in the range $Q^{k}\ll N \ll Q^{2k-2+2/k(k-1)}$ when $k\geq 3$. We
will now improve on that.

\subsection{The connection of the Polynomial LSI with Weyl sums}
\label{sub:integral}

From \cite[Lemma~1]{KH}, we know that
  \begin{equation}\label{eq:Fint}
    \Sigma_{P} \ll Q^{\eps}\|v\|^{2}\bigg(\sum_{Q<q\leq 2Q} P(q) +
    \max_{Q<r\leq 2Q} \max_{\substack{1\leq
        b<P(r)\\\gcd(b,P(r))=1}}  
      \int_{1/N}^{1/2} \#\mathcal{F}_{b,P(r)}(x) \frac{dx}{x^{2}}\bigg).
  \end{equation}

The first term in large brackets 
can be estimated as $\ll QM_{Q}$ and is admissible.
It is not necessary to repeat the definition of $\#\mathcal{F}_{b,P(r)}(x)$
since we will just make use of the upper bound
\[
   \#\mathcal{F}_{b,P(r)}(x) \ll B^{-1} Q + B^{-1}\sum_{1\leq a\leq B}|S_{a}|
\]
with $B^{-1}=2M_{Q}x$ and
\[
    S_{a}:=\sum_{q\leq 2Q} \e\Big(\frac{ab}{P(r)}P(q)\Big),
\]
which has been shown in the deduction of \cite[(8)]{KH}.

To consider the integral expression in \eqref{eq:Fint},
fix a pair $b,r$ with $r\in[Q,2Q]$, $1\leq b<P(r)$ and
  $\gcd(b,P(r))=1$.  
We substitute $B^{-1}=2M_{Q}x$ 
and estimate as follows.
 \begin{multline*}
   \begin{aligned} 
\int_{1/N}^{1/2} &\#\mathcal{F}_{b,P(r)}(x) \frac{dx}{x^{2}}\\
   &\ll   \int_{1/4M_{Q}}^{N/2M_{Q}} \bigg(B^{-1}Q +
B^{-1}\sum_{a\leq B} |S_{a}|\bigg) M_{Q} \,dB \\     
&\ll QM_{Q}\log N + M_{Q} \int_{1/4M_{Q}}^{N/2M_{Q}} 
B^{-1}\sum_{a\leq B} |S_{a}| \,dB \\ 
&\ll QM_{Q}\log N + M_{Q} \sum_{a\leq N/2M_{Q}} |S_{a}|
\int_{a}^{N/2M_{Q}} B^{-1} \,dB \\
&\ll QM_{Q}\log N + M_{Q} \log N \sum_{a\leq N/2M_{Q}}|S_{a}|.
  \end{aligned}
  \end{multline*}

Here the last sum is a discrete moment of a Weyl sum 
with the polynomial $bP(x)/P(r)$ and leading term $b/P(r)$ since $P$
is monic.
We are able to apply Theorem \ref{Satz1} directly with $P(r)$ as approximating
denominator (when $k\geq 3$). By this, we have 
\[
   \sum_{a\leq N/2M_{Q}}|S_{a}| \ll \frac{N}{M_{Q}}Q^{1+\eps} 
     \Big(\frac{Q^{k-1}}{P(r)} + \frac{Q^{k-1}}{N/M_{Q}} + \frac{1}{Q}
     + \frac{P(r)}{QN/M_{Q}}\Big)^{1/2s_{0}}
\]
with
\[s_{0}=(k-1)(k-2)/2+1, \text{ so that }2s_{0}=k(k-1)-2k+4.
\]
Let $\omega:=1/2s_{0}$. 

In the big bracket expression, the last summand $P(r)M_{Q}/NQ$
dominates since $P(r)\gg Q^{k}$ and $1/Q\ll P(r)M_{Q}/NQ$ for $N\ll
Q^{2k}$.  

So we continue with
\begin{align*}
  \int_{1/N}^{1/2} &\#\mathcal{F}_{b,P(r)}(x) \frac{dx}{x^{2}}\\
&\ll  QM_{Q}\log Q + N^{1-\omega}M_{Q}^{\omega}
 Q^{1-\omega+\varepsilon} P(r)^{\omega} \\
&\ll QM_{Q}\log Q +  N^{1-\omega} M_{Q}^{2\omega}
  Q^{1-\omega+\varepsilon} \\
&\ll Q^{k+1}\log Q +  N^{1-\omega} Q^{1+2k\omega-\omega+\varepsilon},
\end{align*}
where we used $M_{Q}\ll Q^{k}$ in the last step.

Compared to the dominating term $NQ^{1-1/k(k-1)}$ in the former bound
$A_{k}(Q,N)$,
we get an advantage if $N^{1-\omega}Q^{1+(2k-1)\omega}\leq
NQ^{1-1/k(k-1)}$, 
which is the case if
$N\geq Q^{2k-2/(k-1)+4/k(k-1)}$, so when $N$ is close to $Q^{2k}$,
but still in the interesting range $Q^{k}\ll N\ll Q^{2k}$.
We have therefore shown the following new improved bound
for the polynomial LSI.
\begin{theorem}
\label{th:newPLSI}
In the setting of Section~\ref{sec:polyLSI} when $k\geq 3$,
\begin{equation}
\label{eq:newPLSI}
  \Sigma_{P}\ll Q^{\varepsilon} \|v\|^{2} (Q^{k+1} +
  \min\{A_{k}(Q,N), N^{1-\omega} Q^{1+(2k-1)\omega}\})
\end{equation}
with $\omega=1/((k-1)(k-2)+2)$.
\end{theorem}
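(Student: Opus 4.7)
The plan is to follow the setup in Subsection~\ref{sub:integral} and derive the new bound by inserting Theorem~\ref{Satz1} in place of Theorem~\ref{th:weyloriginal}. Starting from the integral representation of \cite[Lemma~1]{KH} recorded in \eqref{eq:Fint}, I would first observe that $\sum_{Q<q\leq 2Q}P(q)\ll QM_Q\ll Q^{k+1}$, which handles the first summand and contributes the term $Q^{k+1}$ in the asserted bound. The work lies in bounding the maximum of the integral over the pairs $(b,r)$ with $r\in[Q,2Q]$, $1\leq b<P(r)$, $\gcd(b,P(r))=1$.

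For the integral, I would fix such a pair $(b,r)$ and use the bound $\#\mathcal{F}_{b,P(r)}(x)\ll B^{-1}Q+B^{-1}\sum_{a\leq B}|S_a|$ with $B^{-1}=2M_Q x$. Substituting $B^{-1}=2M_Q x$ converts the $dx/x^2$ integral into a $dB$ integral over $B\in[1/4M_Q,N/2M_Q]$; after exchanging the order of summation and integration, the contribution of the diagonal term is absorbed in $QM_Q\log N\ll Q^{k+1}\log Q$, and the main quantity to estimate reduces to the discrete first moment $\sum_{a\leq N/2M_Q}|S_a|$, where $S_a$ is a Weyl sum with polynomial $bP(x)/P(r)$. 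Since $P$ is monic, the leading coefficient is $b/P(r)$ with $\gcd(b,P(r))=1$, so a trivial Diophantine approximation with $q=P(r)$ is available.

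The key step is to apply Theorem~\ref{Satz1} to this moment with $z=1$, $T=N/(2M_Q)$, $x=2Q$, $q=P(r)\gg Q^k$. This yields
\[
\sum_{a\leq N/2M_Q}|S_a|\ll \frac{N}{M_Q}Q^{1+\eps}
\Bigl(\tfrac{Q^{k-1}}{P(r)}+\tfrac{Q^{k-1}M_Q}{N}+\tfrac{1}{Q}+\tfrac{P(r)M_Q}{QN}\Bigr)^{\omega},
\]
where $\omega=1/2s_0=1/((k-1)(k-2)+2)$. Since $P(r)\gg Q^k$ and $N\ll Q^{2k}$, a short comparison shows that the last term $P(r)M_Q/(QN)$ dominates the expression in the large bracket. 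Combining the resulting estimate with the $QM_Q$ contribution and using $M_Q\ll Q^k$ yields an integral bound of order $Q^{k+1}+N^{1-\omega}Q^{1+(2k-1)\omega+\eps}$, and multiplying by $Q^\eps\|v\|^2$ gives the second alternative in the minimum. The first alternative $A_k(Q,N)$ is simply the bound obtained by plugging the unimproved Theorem~\ref{th:weyloriginal} into the same machinery, which the author has already recorded. Taking the minimum gives \eqref{eq:newPLSI}.

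The main obstacle is the bookkeeping that identifies the dominant summand in the bracketed expression of Theorem~\ref{Satz1} across the full interesting range $Q^k\ll N\ll Q^{2k}$; once that is settled, the remaining steps are mechanical, and there is no need to worry about the case $k=2$ since the assertion is stated only for $k\geq 3$, matching the hypothesis of Theorem~\ref{Satz1}.
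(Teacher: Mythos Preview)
Your proposal is correct and follows essentially the same route as the paper's own proof: start from \eqref{eq:Fint}, convert the integral via $B^{-1}=2M_Qx$, reduce to the first moment $\sum_{a\leq N/2M_Q}|S_a|$, apply Theorem~\ref{Satz1} with $q=P(r)$, identify $P(r)M_Q/(QN)$ as the dominant bracket term in the range $Q^k\ll N\ll Q^{2k}$, and combine with the previously recorded $A_k(Q,N)$ bound to get the minimum. The only cosmetic difference is that the paper tracks the extra $\log N$ factor from the $B$-integration explicitly before absorbing it into $Q^{\eps}$, which you do implicitly.
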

Theorem~\ref{th:newPLSI} offers an improvement compared to known previous
results when $k\geq 4$. This is since for $k=3$, the additional bound from
\cite{BaiZhao2} is still stronger in this case.

It is interesting what we would obtain having Conjecture~\ref{conj:c}.
In this case, we would be able to 
gain a factor $Q^{(1-k)\omega}$.
Then, we would arrive at the following result.

\begin{conjecture}
\label{conj1} In the setting of Section~\ref{sec:polyLSI} when $k\geq 3$,
\[
  \Sigma_{P}\ll Q^{\varepsilon} \|v\|^{2} (Q^{k+1} + \min\{A_{k}(Q,N),
  N^{1-\omega} Q^{1+k\omega}\})
\]
with $\omega=1/((k-1)(k-2)+2)$.
\end{conjecture}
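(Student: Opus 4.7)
The plan is to repeat essentially verbatim the derivation of Theorem~\ref{th:newPLSI}, but using the (assumed) bound from Conjecture~\ref{conj:c} in place of Theorem~\ref{Satz1}. I would start from the integral representation \eqref{eq:Fint}, bound the first term by $\ll QM_Q \ll Q^{k+1}$ as before, and for the integral use the same estimate $\#\mathcal{F}_{b,P(r)}(x) \ll B^{-1}Q + B^{-1}\sum_{a \leq B} |S_{a}|$ with $B^{-1} = 2M_Q x$. After the same change of variables and interchange of order of summation and integration, the task is reduced, up to the admissible contribution $QM_Q\log N$, to estimating $M_Q \log N \cdot \sum_{a \leq N/(2M_Q)} |S_{a}|$, where $S_a$ is as defined in Section~\ref{sub:integral}.

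Next, I would apply Conjecture~\ref{conj:c} with $z=1$, $T = N/(2M_Q)$, $x = Q$, and approximating denominator $q = P(r)$. This is legitimate because $P$ is monic, so the leading coefficient of $bP(\cdot)/P(r)$ is $b/P(r)$ with $\gcd(b,P(r))=1$, giving a trivial Dirichlet-type approximation with denominator $P(r)$. Conjecture~\ref{conj:c} then yields
\[
\sum_{a \leq N/(2M_Q)} |S_{a}| \ll \frac{N}{M_Q}\, Q^{1+\eps} \Big( \frac{1}{P(r)} + \frac{\log P(r)}{N/M_Q} + \frac{1}{Q^{k}} + \frac{P(r)\log P(r)}{Q^{k}\cdot N/M_Q} \Big)^{1/2s_0}.
\]
The only real computation is identifying the dominant term in the bracket: under the standing hypothesis $P(r) \asymp Q^{k}$, $M_Q \ll Q^{k}$, and in the relevant range $Q^{k}\ll N \ll Q^{2k}$, the fourth term dominates and is of size $Q^{k}/N$ up to logs. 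This replaces the dominant quantity $Q^{2k-1}/N$ that arose when Theorem~\ref{Satz1} was used in the proof of Theorem~\ref{th:newPLSI}, and accounts for exactly the promised gain $Q^{(1-k)\omega}$.

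Substituting back, with $\omega = 1/2s_0 = 1/((k-1)(k-2)+2)$, I obtain
\[
M_Q \log Q \cdot \sum_{a \leq N/(2M_Q)} |S_{a}| \ll N Q^{1+\eps} (Q^{k}/N)^{\omega} = N^{1-\omega} Q^{1+k\omega + \eps}.
\]
Combined with the $Q^{k+1}$ term from $\sum_{q\leq 2Q} P(q)$, and retaining the bound $A_k(Q,N)$ coming from the earlier derivation (which is still available independently), I arrive at the asserted bound.

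The reason this is not an obstacle is that all arithmetic and analytic work has already been done in Section~\ref{sub:integral}; only the moment input changes. The only point to watch is that the fourth bracket term is indeed dominant throughout the entire range $Q^{k}\ll N \ll Q^{2k}$, which is a direct consequence of $P(r)\gg Q^{k}$ and of $M_Q \asymp Q^{k}$. Thus, conditional on Conjecture~\ref{conj:c}, the statement in Conjecture~\ref{conj1} follows with no additional ideas beyond the replay just outlined.
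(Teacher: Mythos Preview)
Your proposal is correct and follows precisely the route the paper indicates: the paper does not give a full proof of Conjecture~\ref{conj1} but merely observes that, assuming Conjecture~\ref{conj:c}, the derivation of Theorem~\ref{th:newPLSI} goes through with the improved bracket expression, yielding the extra saving $Q^{(1-k)\omega}$; you have spelled out exactly these details. One minor remark: the second and fourth bracket terms are in fact of the same order $Q^{k}\log Q/N$, but this does not affect your conclusion since either gives the stated bound after absorbing logarithms into $Q^{\eps}$.
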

Note that if we could take even $1/k(k-1)$ at the place of $\omega$,
the expression $N^{1-1/k(k-1)} Q^{1+1/(k-1)}$ coincides with the
second summand in $A_{k}(Q,N)$.

Still, these conjectural bounds are far from Zhao's conjecture
in \cite{Zhao} stating 
\[
  \Sigma_{P}\ll Q^{\varepsilon} \|v\|^{2} (Q^{k+1} + N).
\]
Conjecture \ref{conj1} might be rather within reach 
of further refinements of the methods
presented in this article.

We remark that an attempt to use Theorem~\ref{Satz3} will not give
more if more on the coefficient of $q^{k-1}$ in $P(q)$, say
$\alpha_{k-1}$, is known. This is since the coefficient of $q^{k-1}$
in the polynomial $bP(q)/P(r)$ is then $\alpha_{k-1}b/P(r)$. By
Theorem~\ref{Satz3}, one needs to look then at the rational approximations
to $\alpha_{k-1}b$. Since $b$ is supposed to be \emph{any} coprime
residue mod $P(r)$, there will always be one with small denominator
which offers no advantage in Theorem~\ref{Satz3}.


\section{Acknowledgements}
The author thanks the organizers of the Workshop
on Efficient Congruencing in March 2017
at the Fields institute in Toronto during the Thematic Program 
on Unlikely Intersections, Heights, and Efficient Congruencing
for an inspiring stay. The author also 
thanks the referee for many useful suggestions and comments on the
manuscript which led to an improvement of some of the 
material.


\end{document}